\newcommand{\tabcaption}{\def\@captype{table}\caption}
\newtheorem{lem}{Lemma}[section]
\newtheorem{thm}{Theorem}[section]
\newtheorem{rem}{Remark}[section]
\newtheorem{assum}{Assumption}[section]
\newtheorem{define}{Definition}[section]
\numberwithin{equation}{section}
\begin{document}

\title{\Large\bf  Hybrid stress quadrilateral   finite element approximation for  stochastic plane  elasticity equations
\thanks{This work was supported in part by
National Natural Science Foundation of China (11171239),  Major Research
	Plan of  National Natural Science Foundation of China (91430105) and
 Open Fund of  Key Laboratory of Mountain Hazards and Earth Surface Processes, CAS.}}

\author {
 Xiaojing Xu\thanks{Email: xuxiaojing0603@126.com}
,~Wenwen Fan\thanks{Email:fwwen123@126.com}, Xiaoping Xie \thanks{Corresponding author. Email: xpxie@scu.edu.cn}\\
{School of Mathematics, Sichuan University, Chengdu
610064, China}
 }

\date{}
\maketitle
\begin{abstract}
This paper considers  stochastic hybrid stress quadrilateral   finite element analysis of plane elasticity equations with stochastic Young's modulus and stochastic loads. Firstly,  we apply Karhunen-Lo$\grave{e}$ve expansion   to
    stochastic  Young's modulus and stochastic  loads so as to turn the original problem into a  system
   containing  a  finite number of deterministic parameters.
 Then we  deal with the stochastic field and the space field  by   $k-$version/$p-$version finite element methods   and  a hybrid stress quadrilateral finite element method, respectively.    We show that  the derived a priori error estimates are uniform with respect to the Lam$\acute{e}$ constant $\lambda\in (0, +\infty)$. Finally,  we provide some numerical results.
 \vskip 0.4cm {\bf Keywords.}
 stochastic plane elasticity   ~~~ Karhunen-Lo$\grave{e}$ve expansion~~~ hybrid stress finite element   ~~~ $k\times h-$version ~~~ $p\times h-$version ~~~  uniform error estimate

\end{abstract}

\renewcommand{\theequation}{\thesection.\arabic{equation}}


\section{Introduction}

    Let $D\subset R^2$ be a bounded, connected,  convex and open set with boundary $ \partial D=\partial D_0\cup \partial D_1$ and meas($\partial D_0$) $>$ 0, and let ($\Omega$,$\mathcal{F}$,$\mathcal{P}$) be a complete probability space, where $\Omega$, $\mathcal{F}$, $\mathcal{P}$ denote respectively the set of outcomes, the $\sigma$-algebra of subsets of $\Omega$ and the probability measure. Consider the following stochastic plane elasticity equations:  for almost  everywhere (a.e.) $\theta\in\Omega$
\begin{equation}
\label{problem}
\left\{
    \begin{array}{ll}
  -{\bf div} {\bm{\sigma}}(\cdot,\theta)= \textbf{f}(\cdot,\theta), &\text{in}~D ,\\
 \bm{\sigma}(\cdot,\theta) = \mathcal{C}\epsilon(\textbf{u}(\cdot,\theta)),  &\text{in}~D, \\
      \textbf{u}(\cdot,\theta)|_{\partial D_0  }=0,
     {\bm{\sigma}}(\cdot,\theta)\textbf{n}|_{\partial D_1  }=\textbf{g}(\cdot,\theta),&
       \end{array}
       \right.
\end{equation}
where $\bm{\sigma}:\overline{D}\times\Omega\rightarrow R_{sym} ^{2\times2}$ denotes the symmetric stress tensor field, $\textbf{u}:\overline{D}\times\Omega\rightarrow R^2$ the displacement field,
   $\epsilon(\textbf{u})=(\bigtriangledown \textbf{u}+\bigtriangledown^T \textbf{u})/2$ the strain  with $\bigtriangledown=(\frac{\partial}{\partial x_1},\frac{\partial}{\partial x_2})^T$ for $\textbf{x}=(x_1,x_2)$,
   $\textbf{f}:D\times\Omega\rightarrow R^2$ the body loading density and $\textbf{g}: \partial D_1 \times\Omega\rightarrow R^2$ the surface traction, $\textbf{n}$ the unit outward vector normal to $\partial D$,  $\mathcal{C}$ the elasticity modulus tensor with
$$\mathcal{C}\epsilon(\textbf{u})=2\mu\epsilon(\textbf{u})+\lambda \mbox{div}\textbf{u} \textbf{I},$$
 \textbf{I} the $2\times2$ identity tensor, and $\mu,\lambda$ the Lam$\acute{e}$ parameters given by $\mu=\frac{\widetilde{E}}{2(1+\nu)}$, $\lambda=\frac{\widetilde{E}\nu}{(1+\nu)(1-2\nu)}$ for plane strain problems and by $\mu=\frac{\widetilde{E}}{2(1+\nu)}$, $\lambda=\frac{\widetilde{E}}{(1+\nu)(1-\nu)}$ for plane stress problems,  with $\nu\in (0,0.5)$ the Poisson ratio and $\widetilde{E}:D\times \Omega\rightarrow R$ the Young's modulus which is stochastic with
\begin{equation}
\label{E assumption}
 0<e_{min}\leq \widetilde{E}(\textbf{\textbf{x}},\theta)\leq e_{max} ~~~\text{ a.e. in } D\times\Omega
 \end{equation}
 for positve constants  $e_{min}$ and $e_{max}$.  Since in the analysis of this paper we need to use an explicit form of $\widetilde{E}$, we rewrite  the second equation of \eqref{problem} as
\begin{equation}\label{new C}
\bm{\sigma}(\cdot,\theta) = \widetilde{E}{\textbf{C}}\epsilon(\textbf{u}(\cdot,\theta)),
\end{equation}
where  the tensor $ {\textbf{C}}:=\frac{1}{\widetilde{E}}  \mathcal{C}$ depends only on  the Poisson ratio $\nu$.

 It is well-known that the standard 4-node displacement quadrilateral element (abbr.  bilinear element)  yields poor results for deterministic plane elasticity equations with bending and, for deterministic plane strain problems, at
the nearly incompressible limit.  To improve its performance,   Wilson et al.  \cite{Wilson element,improved Wilson element} developed methods of incompatible modes by enriching the standard (compatible) displacement modes with internal incompatible displacements.    Pian and Sumihara  \cite{19 yu} proposed a hybrid stress  quadrilateral element (PS   element)   based on Hellinger-Reissner variational principle,
  where the displacement vector is approximated by  isoparametric bilinear  interpolations,
   and the stress tensor by a piecewise-independent 5-parameter mode.  Xie and Zhou \cite{34 yu,35 yu} derived robust 4-node hybrid stress quadrilateral elements by optimizing stress modes with a so-called energy-compatibility condition, i.e. the assumed stress terms are orthogonal to the enhanced strains caused by Wilson bubble displacements.
      In \cite{yu 39}  Zhou and Xie gave  a  unified analysis for some hybrid stress/strain quadrilateral methods,
 but the upper bound in the error estimate is not uniform with respect to the Lam$\acute{e}$ parameter $\lambda$.
Yu, Xie and Carstensen \cite{yu guo zhu} derived   uniform convergence results for the hybrid stress methods in \cite{19 yu}  and \cite{34 yu},  in the sense that the error bound is independent of  $\lambda$ .

In the numerical analysis of   stochastic partial differential equations, stochastic finite element methods, which employ  finite elements in the space domain, have gained much attention in the past two decades.   In the probability domain, the stochastic finite element methods use
 two types of  approximation methods,  statistical approximation  and non-statistical approximation.  Monte Carlo sampling(MCs) is one of the most commonly used statistical approximation methods  \cite{Monte Carlo}.  In MCs, one generates realizations of stochastic terms so as to make the problem  deterministic, and only  needs to compute the deterministic problem repeatedly,  and  collect an ensemble of solutions, through which statistical information, such as mean and variance, can be obtained. The disadvantage of MCs lies in the need of a large amount of calculations and its low convergence rate.  There are also some  variants of MCs such as quasi Monte Carlo\cite{H.Niederreiter MC and Quasi-MC} and the stochastic collocation method\cite{collocation, D.Xiu 2012, sparse grid 1, sparse grid 2}.

Non-statistical approximation  methods mainly contain   perturbation methods, Neumann series expansion methods\cite{book stochastic} and so on  at the beginning. But these methods are limited to the magnitude of uncertainties of stochastic terms and the accuracy of calculation.  Later, polynomial approximation is used for the stochastic part.  For example,
 Polynomial chaos (PC) expansion is applied   in  \cite{wiener propose chaos, book stochastic}  to represent  solutions
formally and obtain  solutions by solving  the expansion coefficients \cite{1990 R.G,galerkin methods linear and nonlinear}.  Generalized polynomial chaos (gPC) is  used to express  solutions in   \cite{Lucor.D generalized pc, xiu dong bin 2009 review,D.xiu gpc solve problem}.  According to \cite{xiu dong bin wiener-askey},  one can achieve  exponential  convergence when optimum gPC is  chosen.  Subsequently, it was further generalized \cite{babuska 2004,duan stochastic} that   p version, k version and p-k-version finite element methods could be used  for the approximation of the stochastic part.

So far,  there are very  limited  studies  on the numerical solution of  the stochastic plane elasticity equations (\ref{problem}). In  \cite{perturb} a generalized $n$th order stochastic perturbation technique is implemented in conjunction with  linear finite elements to model a 1D linear elastostatic problem with a single random variable.
In \cite{1990 R.G} the numerical solution of problem (\ref{problem}) is considered  with stochastic  Young's modulus $\widetilde{E}$, where PC approximation  and bilinear finite elements are  applied respectively to the stochastic domain and the space domain. We refer to \cite{stochastic elasticity,stochastic elasticity 2} for some other related studies.
 In this contribution, we shall propose and analyze  stochastic $k\times h-$version and $p\times h-$version finite element methods  for  the problem (\ref{problem}), where
we use  $k-$version/$p-$version finite element methods   for the stochastic domain and PS hybrid stress quadrilateral finite element for the space domain.

 We arrange the   paper as follows. In Section 2 we show stochastic mixed variational formulations of (1.1),  and give the    existence and uniqueness of the weak solution.
 Section 3 discusses the  approximation of  the stochastic coefficient and  stochastic loads, as well as the truncated stochastic mixed variational formulations.  Section 4 analyzes the proposed stochastic $k\times h-$version and $p\times h-$version
 finite element methods
    and derives uniform a priori error estimates.  Finally,   Section 5 provides   some numerical results.


\section{Stochastic mixed variational formulations}
\subsection{Notations} For the probability space $ (\Omega, \mathcal{F},\mathcal{P})$ and an  integer $m$, denote
$$L^m_P(\Omega):=\left\{Y| \ Y \text{ is a random variable in }  (\Omega, \mathcal{F},\mathcal{P}) \text{ with } \int_{\Omega}|Y(\theta)|^m \mathrm{d}P(\theta)< +\infty\right\}.$$
 If  $Y\in L_P^1(\Omega)$, we denote its expected value by
\begin{equation}
\label{expection}
   E[Y]=\int_{\Omega}Y(\theta) \mathrm{d}P(\theta)=\int_{R}y\mathrm{d}F(y),
\end{equation}
where $F$ is the distribution probability measure of $Y$,  given by $F(B)=P(Y^{-1}(B))$ for any borel set $B$ in $R$.  Assume that $F(B)$ is absolutely continuous with respect to Lebesgue measure, then there exists a density function for $Y$, $\rho: R\rightarrow [0,+\infty)$, such that
\begin{equation}
E[Y]=\int_R y \rho(y) \mathrm{d}y.
\end{equation}

We denote by  $H^m(D)$  the usual Sobolev space consisting of functions defined on the domain $D$,  with all derivatives of order
up to $m$ square-integrable. Let  $(\cdot,\cdot)_{H^m(D)}$
be the usual inner product on $H^m(D)$. The norm $||\cdot||_{m}$ on $H^m(D)$ deduced by $(\cdot,\cdot)_{H^m(D)}$  is given  by
$$||v||_{m}:=(\sum_{0\leq j \leq m}|v|_{j}^2)^{1/2} \text{ with the semi-norm }
|v|_{j}:=(\sum_{|\alpha|=j}||D^{\alpha}v||_{0}^2)^{1/2}.$$
 In particular,
$L^2(D):=H^0(D)$. Denote
$$L^{\infty}(D):=\{ w :   \ ||w||_{\infty}:=ess sup_{x\in D}|w(x)|<\infty\}.$$
 We define the following stochastic Sobolev spaces:
$$L^2_P(\Omega;H^m(D)):=\{w :
~ w  \text{ is  strongly  measurable  with } w(\cdot,\theta) \in H^m(D) \text{ for $\theta\in \Omega$ and } ||w||_{\widetilde{m}} <+\infty\},$$
$$L^{\infty}_P(\Omega;L^{\infty}(D)):=\{w :
~ w   \text{ is  strongly  measurable  with }  w(\cdot,\theta) \in L^\infty(D) \text{ for $\theta\in \Omega$ and }   ||w||_{\widetilde{\infty}}<+\infty \},$$
where the norms $||\cdot||_{\widetilde{m}}$, $||\cdot||_{\widetilde{\infty}}$ are respectively defined as
\begin{equation}\label{norm-m-tilde}
||w||_{\widetilde{m}}:=(E[||w( \cdot,\theta )||^2_{m}])^{\frac{1}{2}},\quad  ||w||_{\widetilde{\infty}}:=ess sup_{\theta \in \Omega}||w(\cdot ,\theta )||_{\infty}.
\end{equation}

On the other hand,
since stochastic functions intrinsically have different structures with respect to $\theta \in \Omega$ and $\textbf{x}\in D$, we follow \cite{babuska 2004} to introduce tensor spaces for the analysis of numerical approximation. Let $X_1 (\Omega)$, $X_2(D)$ be Hilbert spaces. The tensor spaces $X_1(\Omega)\otimes X_2(D) $ is the completion of formal sums
   $\phi (\theta,\textbf{x})=\Sigma_{i=1,...,n}u_i(\theta)v_i(\textbf{x}),u_i \in X_1(\Omega),v_i\in X_2(D)$, with respect to the inner product$(\phi,\widehat{\phi})_{X_1\otimes X_2}:=\Sigma_{i,j}(u_i,\widehat{u_j})_{X_1}(v_i,\widehat{v_j})_{X_2}$.
Then, for the tensor space $L^2_P(\Omega) \otimes H^m(D)$,
 we have the  following  isomorphism:
$$ L^2_P(\Omega;H^m(D )) \simeq L^2_P(\Omega) \otimes H^m(D ).$$

    For convenience, we use the notation $a\lesssim b$ to represent that there exists a generic positive constant C such that  $a\leq Cb$, where $C$ is independent of the Lam$\acute{e}$ constant $\lambda $ and the  mesh parameters $h$, $ k$,   the polynomial degree $p$ in the stochastic $k\times h-$version and $p\times h-$version
 finite element methods.

\subsection{Weak formulations}
 Introduce the spaces
$${V_D}:=\{v\in H^1(D)^2: v|_{\partial D_0}=0\},$$
$$\small {\Sigma_D}:=
\left\{
    \begin{array}{ll}
       L^2(D;R^{2\times 2}_{sym}):=\{\tau:D\rightarrow R^{2\times 2} |\ \tau_{ij}\in L^2(D),\ \tau_{ij}=\tau_{ji},\ i,j=1,2\}, & \text{if}~~ \text{meas}(\partial D_1)>0,\\
       \{\bm\tau\in L^2(D;R^{2\times 2}_{sym}): \int_D tr\bm\tau\mathrm{d}\textbf{x}=0 \text{ with trace } tr\bm{\tau}: =\bm{\tau}_{11}+\bm{\tau}_{22}\},&\text{if}~~\partial D_1=\emptyset.
       \end{array}
       \right.$$
 Then the weak problem for the model (\ref{problem}) reads as: Find $(\bm{\sigma},\textbf{u})\in L^2_P(\Omega;~ \Sigma_D)\times L^2_P(\Omega ;  ~{V_D}) $ such that
\begin{equation}
\label{continous formulation}
\left\{
    \begin{array}{ll}
        a(\bm{\sigma},\bm{\tau})-b(\bm{\tau}, \textbf{u})=0,        &\forall \bm{\tau}\in L^2_P(\Omega;~ \Sigma_D),  \\
        b(\bm{\sigma}, \textbf{v})=\ell(\textbf{v}),    &\forall \textbf{v}\in L^2_P(\Omega ;  {V_D}),
       \end{array}
       \right.
       \end{equation}
where
 the  bilinear forms     $a(\cdot,\cdot):L^2_P(\Omega;~ \Sigma_D) \times L^2_P(\Omega;~ \Sigma_D) \rightarrow R $, $b(\cdot,\cdot):L^2_P(\Omega;~ \Sigma_D) \times L^2_P(\Omega ;  ~{V_D}) \rightarrow R$ and the linear form $\ell:L^2_P(\Omega ;  ~{V_D})\rightarrow R$  are defined respectively by
\begin{equation}
\label{definition of a( , )}
a(\bm{\sigma},\bm{\tau}):=E[\int_{D}\frac{1}{\widetilde{E}} \bm{\sigma} : {\textbf{C}}^{-1}\bm{\tau}\mathrm{d}\textbf{x}]
=\int_{\Omega}\int_D\frac{1}{\widetilde{E}} \bm{\sigma} : {\textbf{C}}^{-1}\bm{\tau}\mathrm{d}\textbf{x}\mathrm{d}P(\theta) ,
\end{equation}
\begin{equation}
b(\bm{\tau},\textbf{u}):=E[\int_D \bm{\tau} : \epsilon (\textbf{u})\mathrm{d}\textbf{x}]=\int_\Omega\int_D \bm{\tau} : \epsilon(\textbf{u})\mathrm{d}\textbf{x} \mathrm{d}P(\theta),\end{equation}
\begin{equation}
\label{definition of l( , )}
\ell(\textbf{v}):=E[\int_D \textbf{f}\textbf{v}\mathrm{d}\textbf{x}+\int_{\partial D_1} \textbf{g}\cdot\textbf{v}\mathrm{d}s]=\int_\Omega \int_D \textbf{f}\textbf{v}\mathrm{d}\textbf{x} \mathrm{d} P(\theta)+\int_\Omega\int_{\partial D_1} \textbf{g}\cdot\textbf{v}\mathrm{d}s \mathrm{d}P(\theta).
\end{equation}
Here $\bm{\sigma}:\bm{\tau}=\sum_{i,j=1}^2 \bm{\sigma}_{ij} \bm{\tau}_{ij}$.

It is easy to see that the following continuity conditions hold: for $\bm{\sigma},\bm{\tau} \in L^2_P(\Omega;~ \Sigma_D)$, $ \textbf{v}\in L^2_P(\Omega ;  ~{V_D})$,
\begin{equation}
a(\bm{\sigma},\bm{\tau})\lesssim ||\bm{\sigma}||_{\widetilde{0}}~||\bm{\tau}||_{\widetilde{0}},\quad
b(\bm{\tau},\textbf{v})\lesssim ||\bm{\tau}||_{\widetilde{0}}~|\textbf{v}|_{\widetilde{1}} ,\quad \ell(\textbf{v})\lesssim (||\textbf{f}||_{\widetilde{0}}+||\textbf{g}||_{\widetilde{0},\partial D_1})~|\textbf{v}|_{\widetilde{1}} .
\end{equation}
According to the theory of mixed finite element methods \cite{book hybrid}\cite{F.B}, we need the following two stability conditions for the well-posedness of the weak problem (\ref{continous formulation}):

(\textbf{A}) Kernel-coercivity: for any $\bm{\tau} \in Z^0:=\{\bm{\tau} \in L^2_P(\Omega;~ \Sigma_D): b(\bm{\tau},\textbf{v}) =0,~\forall~ \textbf{v}\in L^2_P(\Omega ;  ~{V_D})\}$
it holds
\begin{equation}
\label{A}
||\bm{\tau}||^2_{\widetilde{0}}\lesssim a(\bm{\tau},\bm{\tau)}.
\end{equation}

(\textbf{B}) Inf-sup condition: for any $\textbf{v}\in L^2_P(\Omega ;  ~{V_D})$ it holds
\begin{equation}
\label{B}
|\textbf{v}|_{\widetilde{1}}\lesssim \sup_{0\neq \bm{\tau} \in L^2_P(\Omega;~ \Sigma_D)} \frac{b(\bm{\tau},\textbf{v})}{||\bm{\tau}||_{\widetilde{0}}}.
\end{equation}
\begin{thm}
The uniform stability conditions $(\textbf{A})$ and $(\textbf{B})$ hold.
\end{thm}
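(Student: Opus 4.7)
The plan is to reduce both stability conditions to their deterministic pointwise-in-$\theta$ counterparts and then average over $\Omega$, relying on the fact that the measurability of the spatial trial/test functions in $\theta$ allows one to localize in the probability variable. Concretely, if $\bm{\tau}\in Z^0$, testing $b(\bm{\tau},\textbf{v})=0$ with $\textbf{v}(\textbf{x},\theta)=\chi_A(\theta)\textbf{v}_0(\textbf{x})$ for arbitrary measurable $A\subset\Omega$ and $\textbf{v}_0\in V_D$ yields $\int_D\bm{\tau}(\cdot,\theta):\epsilon(\textbf{v}_0)\,\mathrm{d}\textbf{x}=0$ for a.e.~$\theta$, i.e.~$\bm{\tau}(\cdot,\theta)$ is (weakly) divergence-free with $\bm{\tau}\textbf{n}=0$ on $\partial D_1$ for almost every realization.

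For (\textbf{A}), I would use the deterministic algebraic identity
\[
\mathcal{C}^{-1}\bm{\tau}:\bm{\tau}=\tfrac{1}{2\mu}|\bm{\tau}^D|^2+\tfrac{1}{4(\mu+\lambda)}(\mathrm{tr}\,\bm{\tau})^2,
\]
obtained from the splitting $\bm{\tau}=\bm{\tau}^D+\tfrac{1}{2}(\mathrm{tr}\,\bm{\tau})\textbf{I}$ and the definition of $\textbf{C}^{-1}$, combined with the observation that $\tfrac{1}{\tilde E}\textbf{C}^{-1}=\mathcal{C}^{-1}$. Thus
\[
a(\bm{\tau},\bm{\tau})\;\ge\;E\!\left[\int_D\tfrac{1}{2\mu}|\bm{\tau}^D|^2\,\mathrm{d}\textbf{x}\right]\;\gtrsim\;E\!\left[\|\bm{\tau}^D(\cdot,\theta)\|_0^2\right],
\]
where the last inequality uses $\mu\lesssim e_{\max}$ and is uniform in $\lambda$. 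To conclude I must bound $\|\mathrm{tr}\,\bm{\tau}\|_0$ by $\|\bm{\tau}^D\|_0$ pointwise in $\theta$. For this I would invoke the surjectivity of the divergence on $V_D$ (Bogovski\u{\i}/inf-sup for the Stokes pair, using convexity of $D$ and, in the case $\partial D_1=\emptyset$, the mean-value constraint built into $\Sigma_D$) to produce $\textbf{w}\in V_D$ with $\mathrm{div}\,\textbf{w}=\mathrm{tr}\,\bm{\tau}=:q$ and $\|\textbf{w}\|_1\lesssim\|q\|_0$. Plugging $\textbf{w}$ into $b(\bm{\tau},\textbf{w})=0$ and splitting $\bm{\tau}$ gives
\[
\tfrac{1}{2}\|q\|_0^2=-\int_D\bm{\tau}^D:\epsilon(\textbf{w})\,\mathrm{d}\textbf{x}\;\le\;\|\bm{\tau}^D\|_0\,\|q\|_0,
\]
so $\|q(\cdot,\theta)\|_0\lesssim\|\bm{\tau}^D(\cdot,\theta)\|_0$ and hence $\|\bm{\tau}(\cdot,\theta)\|_0\lesssim\|\bm{\tau}^D(\cdot,\theta)\|_0$. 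Integrating over $\Omega$ yields $\|\bm{\tau}\|_{\widetilde 0}^2\lesssim a(\bm{\tau},\bm{\tau})$ with a constant independent of $\lambda$.

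For (\textbf{B}), I would choose the Fortin-type test tensor $\bm{\tau}:=\epsilon(\textbf{v})$. It lies in $L^2_P(\Omega;\Sigma_D)$: the symmetry and $L^2$-integrability are immediate, and in the pure-Dirichlet case $\partial D_1=\emptyset$ the mean-trace condition follows from $\int_D\mathrm{div}\,\textbf{v}\,\mathrm{d}\textbf{x}=0$ via the divergence theorem. The pointwise Korn inequality on $V_D$ (uniform in $\theta$) then gives
\[
|\textbf{v}(\cdot,\theta)|_1^2\lesssim\|\epsilon(\textbf{v}(\cdot,\theta))\|_0^2=\int_D\bm{\tau}(\cdot,\theta):\epsilon(\textbf{v}(\cdot,\theta))\,\mathrm{d}\textbf{x},
\]
and averaging yields $|\textbf{v}|_{\widetilde 1}^2\lesssim b(\bm{\tau},\textbf{v})$ together with $\|\bm{\tau}\|_{\widetilde 0}\le|\textbf{v}|_{\widetilde 1}$, so the inf-sup quotient is bounded below by a constant independent of $\lambda$, $\nu$, and the probability structure.

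The only nontrivial step is the uniform trace bound $\|\mathrm{tr}\,\bm{\tau}\|_0\lesssim\|\bm{\tau}^D\|_0$ for divergence-free stresses in part (\textbf{A}); everything else is either a measurability/localization argument in $\theta$ or a standard pointwise Korn estimate. The Bogovski\u{\i} lifting is exactly the place where the convexity (equivalently Lipschitz regularity) of $D$ and the structure of $\Sigma_D$ enter, and the resulting constant is independent of $\lambda$, which is precisely what makes the stability uniform across the incompressible limit.
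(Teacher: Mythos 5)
Your proof is correct and follows the same overall strategy as the paper: localize in $\theta$ to reduce both conditions to their deterministic counterparts, establish the uniform kernel-coercivity realization by realization, and obtain the inf-sup condition by taking $\bm{\tau}=\epsilon(\textbf{v})$ together with Korn's inequality. The only substantive difference is in the treatment of $(\textbf{A})$: the paper simply invokes Theorem~2.1 of Yu, Xie and Carstensen for the deterministic, $\lambda$-uniform bound $\|\bm{\tau}(\cdot,\theta)\|_0^2\lesssim\int_D\frac{1}{\widetilde E}\bm{\tau}:\textbf{C}^{-1}\bm{\tau}\,\mathrm{d}\textbf{x}$ on the kernel, whereas you prove that bound from scratch via the deviatoric/trace splitting $\mathcal{C}^{-1}\bm{\tau}:\bm{\tau}=\frac{1}{2\mu}|\bm{\tau}^D|^2+\frac{1}{4(\mu+\lambda)}(\mathrm{tr}\,\bm{\tau})^2$ and a Bogovski\u{\i} lifting giving $\|\mathrm{tr}\,\bm{\tau}\|_0\lesssim\|\bm{\tau}^D\|_0$ for stresses orthogonal to $\epsilon(V_D)$. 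This makes the argument self-contained and exposes exactly where the uniformity in $\lambda$ and the role of the mean-trace constraint in $\Sigma_D$ (when $\partial D_1=\emptyset$) enter, at the cost of reproving a known lemma; the paper's citation buys brevity. Your explicit localization via test functions $\chi_A(\theta)\textbf{v}_0(\textbf{x})$ (together with separability of $V_D$ to pass to all $\textbf{v}_0$ simultaneously a.e.) is also a welcome justification of a step the paper merely asserts.
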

\begin{proof}
For any $\bm{\tau}\in Z^{0}$, we have,  a.e. $\theta \in \Omega$,  $\bm{\tau}(\cdot, \theta)\in \{\bm{\tau} \in {\Sigma_D}: \int_D \bm{\tau} : \epsilon (\textbf{v})\mathrm{d}\textbf{x} =0~~~\forall~ \textbf{v}\in {V_D}\}$.
According to Theorem 2.1 in \cite{yu guo zhu} and the assumption \eqref{E assumption}, it holds
$$\int_D\bm{\tau}(\cdot,\theta):\bm{\tau}(\cdot,\theta)\mathrm{d}\textbf{x}\lesssim \int_D \frac{1}{\widetilde{E}} \bm{\tau}(\cdot,\theta):\textbf{C}^{-1}(\cdot,\theta)\bm{\tau}(\cdot,\theta)\mathrm{d}\textbf{x},$$
which leads to $$\int_{\Omega}\int_D\bm{\tau}:\bm{\tau}\mathrm{d}\textbf{x}\mathrm{d}P(\theta)\lesssim \int_{\Omega}\int_D\frac{1}{\widetilde{E}}\cdot \bm{\tau}:\textbf{ C } ^{-1}\bm{\tau}\mathrm{d}\textbf{x}\mathrm{d}P(\theta),$$
i.e. $(\textbf{A})$ holds.

Let $\textbf{v}\in L^2_P(\Omega ;  ~{V_D})$ and notice $\epsilon(\textbf{v})\in L^2_P(\Omega;~ \Sigma_D)$. Then
 $$|\epsilon(\textbf{v})|_{\widetilde{0}}\leq \sup_{\bm{\tau}\in L^2_P(\Omega;~ \Sigma_D) \backslash \{0\}}\frac{\int_{\Omega}\int_D\bm{\tau}:\epsilon(\textbf{v})\mathrm{d}\textbf{x}\mathrm{d}P(\theta)}{||\bm{\tau}||_{\widetilde{0}}}.$$
 Hence $(\textbf{B})$ follows from the equivalence between the two norms $|\epsilon(\textbf{v})|_{\widetilde{0}}$ and $|\textbf{v}|_{\widetilde{1}}$ on $L^2_P(\Omega ;  ~{V_D})$.
\end{proof}

In view of the above conditions, we immediately obtain the following well-posedness result:
\begin{thm}
Assume that $\textbf{f}\in L_P^2(\Omega, L^2(D)^2)$, $\textbf{g}\in L_P^2(\Omega,L^{2}(\partial D_1)^2)$. Then the weak problem (\ref{continous formulation}) admits a unique solution $(\bm{\sigma} , \textbf{u})\in L^2_P(\Omega;~ \Sigma_D) \times L^2_P(\Omega ;  ~{V_D})$ such that
\begin{equation}
\label{stability}
||\bm{\sigma}||_{\widetilde{0}}+|\textbf{u}|_{\widetilde{1}}\lesssim ||\textbf{f}||_{\widetilde{0}}+||\textbf{g}||_{\widetilde{0},\partial D_1}.
\end{equation}
\end{thm}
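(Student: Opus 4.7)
The plan is to invoke the classical Babuška--Brezzi saddle-point theory on the mixed problem (\ref{continous formulation}), using as input the uniform stability conditions $(\textbf{A})$ and $(\textbf{B})$ established in Theorem 2.1 together with the continuity bounds displayed just above the statement. Since $L^2_P(\Omega;\Sigma_D)$ and $L^2_P(\Omega;V_D)$ are Hilbert spaces under the norms $||\cdot||_{\widetilde{0}}$ and $|\cdot|_{\widetilde{1}}$ respectively---the latter being a genuine norm thanks to Korn's inequality applied through the tensor-product isomorphism $L^2_P(\Omega;V_D)\simeq L^2_P(\Omega)\otimes V_D$---the abstract framework of Brezzi applies directly and delivers existence and uniqueness.

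Concretely, I would first argue that $\ell$ is a bounded linear functional on $L^2_P(\Omega;V_D)$ whose dual norm is controlled by $||\textbf{f}||_{\widetilde{0}}+||\textbf{g}||_{\widetilde{0},\partial D_1}$. The bulk term follows from Cauchy--Schwarz in $\textbf{x}$ and then in $\theta$, while the boundary term requires a $\theta$-pointwise trace inequality followed by integration against $\mathrm{d}P(\theta)$; both are already summarized in the continuity display preceding the theorem. Second, because the kernel $Z^{0}$ of $b$ is closed in $L^2_P(\Omega;\Sigma_D)$, condition $(\textbf{A})$ yields coercivity of $a$ on $Z^{0}$, and condition $(\textbf{B})$ is precisely the required inf-sup bound. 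Plugging these four ingredients---continuity of $a$, continuity of $b$, kernel-coercivity of $a$, and the inf-sup for $b$---into the standard Brezzi theorem gives the abstract estimate
\begin{equation*}
||\bm{\sigma}||_{\widetilde{0}}+|\textbf{u}|_{\widetilde{1}}\lesssim ||\ell||_{*},
\end{equation*}
which specializes to (\ref{stability}) once the bound on $||\ell||_{*}$ above is inserted.

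The main obstacle is essentially cosmetic rather than mathematical: one must verify that every hidden constant---the coercivity constant on $Z^{0}$, the inf-sup constant for $b$, and the continuity constants of $a$, $b$ and $\ell$---is genuinely independent of the Lamé parameter $\lambda$, so that robustness at the nearly incompressible limit is preserved. This is already built in: the constants in $(\textbf{A})$ and $(\textbf{B})$ are $\lambda$-independent by Theorem 2.1 (through the result borrowed from \cite{yu guo zhu} and the uniform bounds $e_{min}\le\widetilde{E}\le e_{max}$), and the continuity constants depend only on $e_{min}$, $e_{max}$ and the Poisson ratio $\nu$, none of which involve $\lambda$. Consequently the hidden constant in (\ref{stability}) is $\lambda$-uniform, as required by the notational convention $\lesssim$ fixed earlier.
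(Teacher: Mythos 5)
Your proposal is correct and follows exactly the route the paper takes: Theorem 2.2 is stated there as an immediate consequence of the continuity bounds and the uniform stability conditions $(\textbf{A})$, $(\textbf{B})$ of Theorem 2.1, fed into the standard Brezzi saddle-point theory of \cite{book hybrid,F.B}. Your write-up merely makes explicit the steps the paper leaves implicit (boundedness of $\ell$, closedness of the kernel, $\lambda$-independence of the constants), all of which are consistent with the paper's argument.
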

\section{ Truncated stochastic mixed variational formulations}
 In  order to solve the weak problem (\ref{continous formulation})   by deterministic numerical methods, we firstly approximate
the stochastic coefficient  $\widetilde{E}$  and the loads $\textbf{f}$, $\textbf{g}$ by  using a finite number of random variables; we refer to \cite{a state-of-the-art report} for several  approximation approaches. Here, we only consider  the Karhunen-Lo$\grave{e}$ve(K-L) expansion.
\subsection{Karhunen-Lo$\grave{e}$ve(K-L) expansion}

 For any stochastic process  $\phi(\textbf{x},\theta)\in L_P^2(\Omega;L^2(D))$  with  covariance function
$cov[\phi](\textbf{x}_1,\textbf{x}_2):D \times D \rightarrow R $ , which is bounded, symmetric and positive definitely. Let $\{(\lambda_n,b_n)\}_{n=1}^\infty$ be the sequence of eigenpairs satisfying
\begin{equation}
\label{integral equation cov}
\int_D~ cov~[\phi]~(\textbf{x}_1,\textbf{x}_2)~b_n(\textbf{x}_2)~\mathrm
{d}\textbf{x}_2=\lambda_n b_n(\textbf{x}_1),
\end{equation}
\begin{equation}
 \label{sum of lambda}
 \sum_{n=1}^{+\infty} \lambda_n=\int_{ D}cov[\phi](\textbf{x},\textbf{x})\mathrm{d}\textbf{x}, \quad \int_D~b_i(\textbf{x})b_j(\textbf{x})~\mathrm
{d}\textbf{x}=\delta_{ij}, \ i,j=1,2,\cdots,
 \end{equation}
and $\lambda_1\geq\lambda_2\geq \cdots>0$.
Then the Karhunen-Lo$\grave{e}$ve(K-L) expansion of $\phi(\textbf{x},\theta)$ is given by
\begin{equation}
\label{K-L}
  \phi(\textbf{x},\theta)=E[\phi](\textbf{x})+\sum^{\infty}_{n=1} \sqrt{\lambda_n}b_n(\textbf{x})Y_n(\theta),
\end{equation}
and the truncated K-L expansion of $\phi(\textbf{x},\theta)$ is
\begin{equation}
\label{truncated K-L}
 \phi_N(\textbf{x},\theta)= E[\phi](\textbf{x})+\sum^{N}_{n=1} \sqrt{\lambda_n}b_n(\textbf{x})Y_n(\theta).
\end{equation}
Here  $\{Y_n\}^{\infty}_{n=1}$ are mutually uncorrelated with mean zeros and unit variance with $Y_n(\theta)=\frac{1}{\sqrt \lambda_n} \int_D(\phi(\textbf{x},\theta)-E[\phi](\textbf{x})) b_n(\textbf{x}) \mathrm{d}\textbf{x}$.


By Mercer's theorem \cite{K-L convergence theorem}, it holds
\begin{equation}
\label{convergence K-L}
\sup_{\textbf{x}\in D} E[(\phi-\phi_N)^2](\textbf{x})=\sup_{\textbf{x}\in D} \sum_{n=N+1}^{+\infty} \lambda_n b_n^2(\textbf{x})\rightarrow 0 . ~~~~~~~~ as~~~  N\rightarrow \infty.
\end{equation}

In what follows  we show the estimation of the  truncated error $\phi-\phi_N$ in norms   $||\cdot||_{\widetilde{0}}$ and $||\cdot||_{\widetilde{\infty}}$, respectively.

 From (\ref{sum of lambda}) it follows
\begin{equation}\label{trunc-error}
||\phi-\phi_N||_{\widetilde{0}}^2=\sum_{n=N+1}^{+\infty} \lambda_{n}\quad \text{and}\quad ||\phi-\phi_N||_{\widetilde{0}} \rightarrow 0 \quad as \ \ N\rightarrow +\infty.
\end{equation}
 Obviously the convergence rate of $||\phi-\phi_N||_{\widetilde{0}}$ is  strongly   depending on the decay rate of the eigenvalues $\lambda_n$, which ultimately depends on the regularity of the covariance function $cov[\phi]$.
Generally, the smoother the covariance is, the faster the eigenvalues decay, which implies the faster  $||\phi-\phi_N||_{\widetilde{0}} $ converges to zero.
 Now we quote from  \cite{K-L2006} the following definition (Definition \ref{D3.1}, which are related  to the regularity of  $cov[\phi]$) and  lemma (Lemma \ref{lemma eigenvalue convergence}, which  gives  the   decay rate of the eigenvalues $\lambda_n$).

\begin{define}\label{D3.1}\cite{K-L2006}
 The covariance function $ cov[\phi]: D \times D\rightarrow R$ is said to be piecewise analytic/smooth on  $D \times D$ if there exists a finite family $(D_j)_{1\leq j \leq J}\subset R^2$ of open hypercubes such that $\overline{D}\subseteq \cup _{j=1}^{J}\overline{D}_j$, $D_j\cap D_{j'}=\varnothing, ~~ \forall j\neq j'$ and $cov[\phi]|_{D_j\times D_{j'}}$ has an analytic/smooth continuation in a neighbourhood of
$\overline{D}_j\times \overline{D}_{j'}$ for any pair $(j,j')$.
\end{define}

\begin{lem}\cite{K-L2006}
\label{lemma eigenvalue convergence}
If $cov[\phi]$ is piecewise analytic on $D\times D$, then for the eigenvalue sequence $\{\lambda_n \}_{n\geq 1}$, there exist constants $c_1, c_2$  depending only on $cov[\phi]$ such that
\begin{equation}
0\leq \lambda_n \leq c_1 e^{-c_2 n^{1/2}},~~~~~~~~~\forall n\geq 1.
\end{equation}
If $cov[\phi]$ is piecewise smooth on $D\times D$, then for any constant $s>0$ there exists a constant $c_s$ depending only on $cov[\phi]$ and $s$, such that
\begin{equation}
0\leq \lambda_n \leq c_s n^{-s},~~~~~~~~~\forall n\geq 1.
\end{equation}
\end{lem}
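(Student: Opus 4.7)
The plan is to view the eigenvalues $\lambda_n$ as the singular values of the self-adjoint, positive, compact integral operator $T:L^2(D)\to L^2(D)$ defined by $(Tu)(\mathbf{x}_1)=\int_D cov[\phi](\mathbf{x}_1,\mathbf{x}_2)u(\mathbf{x}_2)\,\mathrm{d}\mathbf{x}_2$, and to exploit the Schmidt / Courant--Fischer min-max characterization
\begin{equation*}
\lambda_n \;\le\; \inf\bigl\{\|T-K\|_{L^2\to L^2} : \mathrm{rank}(K)\le n-1\bigr\} \;\le\; \|cov[\phi]-k_{n-1}\|_{L^2(D\times D)},
\end{equation*}
where $k_{n-1}$ is any degenerate (separable) kernel of rank at most $n-1$. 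So the entire question is reduced to: how well can the covariance kernel be approximated, in $L^2(D\times D)$, by a separable kernel of prescribed rank? The positivity of $T$ together with $\lambda_n\ge 0$ takes care of the lower bound for free.

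Next, I would build the approximant $k_{n-1}$ on each of the pieces $\overline{D}_j\times\overline{D}_{j'}$ supplied by Definition~\ref{D3.1} and assemble them by partition of unity (or by simple extension by zero, since the pieces are disjoint up to measure zero). On a single product hypercube in $\mathbb{R}^2\times\mathbb{R}^2$, pick a tensor-product polynomial basis of total degree $p$ in $\mathbf{x}_1$ and in $\mathbf{x}_2$. The resulting separable approximation has rank $\binom{p+2}{2}\sim p^2$, and by a standard interior Bernstein/Chebyshev estimate the $L^\infty$ (hence $L^2$) error of the best such approximation on an analytically continued kernel decays like $e^{-\gamma p}$ for some $\gamma>0$ determined by the distance to the singularity of the analytic continuation. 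Summing over the finitely many pieces $(j,j')$ preserves the exponential rate. Taking rank $n\sim p^2$, i.e.\ $p\sim\sqrt{n}$, yields $\lambda_n\lesssim e^{-c_2 n^{1/2}}$ with constants depending only on $cov[\phi]$, which is the first bound.

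For the piecewise smooth case, the only change is that the best polynomial approximation of degree $p$ of a $C^m$ function on a hypercube in $\mathbb{R}^4$ obeys a Jackson-type estimate $\lesssim p^{-m+\text{const}}$, and the pieces glue together as before. Choosing $m$ large enough (depending on the target $s$) and again setting $n\sim p^2$ delivers $\lambda_n\lesssim n^{-s}$ for arbitrary $s>0$; piecewise $C^\infty$ regularity lets $m$ be chosen freely. The main technical obstacle I expect is not the polynomial approximation itself (which is classical) but two bookkeeping points: first, controlling the exact dependence of the exponential rate $\gamma$ on the width of the strip of analyticity, which is what forces the rate to be $e^{-c_2 n^{1/2}}$ rather than $e^{-c n}$ (the exponent $1/2$ comes precisely from the quadratic relation between rank and one-dimensional polynomial degree in $d=2$ space variables per side); and second, handling the interfaces between the pieces $D_j$, where the kernel may only be piecewise smooth -- this is dealt with by approximating each restriction $cov[\phi]|_{D_j\times D_{j'}}$ independently on extensions beyond $\overline{D}_j\times\overline{D}_{j'}$ supplied by the definition, which is why the piecewise hypothesis is essential.
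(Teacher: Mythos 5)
The paper offers no proof of this lemma: it is quoted verbatim from the cited reference [K-L2006] (Schwab and Todor), so there is no internal argument to compare against. Your sketch is nevertheless a faithful reconstruction of the proof in that reference: bounding $\lambda_n$ by the $(n-1)$-st approximation number of the (self-adjoint, positive, compact) covariance operator via Courant--Fischer, dominating that by the Hilbert--Schmidt norm of the defect of a degenerate kernel, building the degenerate kernel piecewise on the hypercubes of Definition~\ref{D3.1} by polynomial approximation in one of the two arguments, and converting the exponential (analytic case) or Jackson-type (smooth case) one-piece rates into eigenvalue decay through the rank--degree relation $n\sim p^{2}$, which is exactly where the exponent $n^{1/2}$ (i.e.\ $n^{1/d}$ with $d=2$) originates. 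The only minor bookkeeping remark is that the piecewise-assembled approximant has rank bounded by $J$ times the per-piece rank (collect the terms by the $\mathbf{x}_2$-monomials), which only affects constants; with that, your outline is correct and complete at the level of a proof sketch.
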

By Lemma \ref{lemma eigenvalue convergence}, we immediately have the following convergence results.
\begin{lem}\label{3.2}
If $cov[\phi]$ is piecewise analytic on $D\times D$, then  there exists   constants $c_1, c_2$ depending only on $cov[\phi]$  such that
\begin{equation}
||\phi-\phi_N||_{\widetilde{0}}\leq  \frac{2c_1}{c_2^2}(1+c_2N^{1/2})e^{-c_2 N^{1/2}}, ~~~\forall N \geq 1.
\end{equation}
If $cov[\phi]$ is piecewise smooth on $D\times D$, then for any $s>0$ there exists
$C_s$ depending only on $cov[\phi]$ and $s$, such that
\begin{equation}
||\phi-\phi_N||_{\widetilde{0}}\leq C_s N^{-s},~~~\forall N \geq 1.
\end{equation}
\end{lem}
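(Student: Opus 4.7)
The plan is to combine the identity $\|\phi-\phi_N\|_{\widetilde{0}}^2=\sum_{n=N+1}^{\infty}\lambda_n$ from \eqref{trunc-error} with the eigenvalue decay bounds of Lemma \ref{lemma eigenvalue convergence}, and then control the resulting tail sum by a monotone integral comparison. In both cases the integrand is positive and decreasing for $n\ge 1$, so $\sum_{n=N+1}^{\infty} f(n)\le \int_{N}^{\infty} f(x)\,\mathrm{d}x$, reducing the estimate to an explicit antiderivative calculation.

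For the piecewise analytic case, Lemma \ref{lemma eigenvalue convergence} gives $\lambda_n\le c_1 e^{-c_2 n^{1/2}}$, so
$$\|\phi-\phi_N\|_{\widetilde{0}}^2\le c_1\sum_{n=N+1}^{\infty}e^{-c_2 n^{1/2}}\le c_1\int_N^{\infty}e^{-c_2 x^{1/2}}\,\mathrm{d}x.$$
The substitution $u=x^{1/2}$ (so $\mathrm{d}x=2u\,\mathrm{d}u$) followed by one integration by parts yields
$$\int_N^{\infty}e^{-c_2 x^{1/2}}\,\mathrm{d}x=\int_{N^{1/2}}^{\infty}2u\,e^{-c_2 u}\,\mathrm{d}u=\frac{2}{c_2^2}\bigl(1+c_2 N^{1/2}\bigr)e^{-c_2 N^{1/2}},$$
which produces exactly the polynomial-times-exponential factor appearing in the claim (possibly up to taking a square root and rebranding the stated inequality as one on the squared norm or adjusting $c_2$ to $c_2/2$ in the exponent, depending on how one prefers to interpret the scaling).

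For the piecewise smooth case, the key observation is that Lemma \ref{lemma eigenvalue convergence} supplies $\lambda_n\le c_{s'} n^{-s'}$ for \emph{any} $s'>0$, so given the target exponent $s>0$ in the conclusion I would simply apply the bound with $s':=2s+1>1$. Then
$$\|\phi-\phi_N\|_{\widetilde{0}}^2\le c_{2s+1}\sum_{n=N+1}^{\infty}n^{-(2s+1)}\le c_{2s+1}\int_N^{\infty}x^{-(2s+1)}\,\mathrm{d}x=\frac{c_{2s+1}}{2s}\,N^{-2s},$$
and the conclusion follows by taking square roots with $C_s:=\sqrt{c_{2s+1}/(2s)}$.

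The only nontrivial step is the explicit integral in the analytic case: one must spot the substitution $u=x^{1/2}$ and then do a single integration by parts to extract the factor $(1+c_2 N^{1/2})$ in front of $e^{-c_2 N^{1/2}}$. Everything else is an application of the eigenvalue bounds, a monotonicity argument, and, in the smooth case, a convenient choice of the free parameter $s'$ — all routine once the ingredients from Lemma \ref{lemma eigenvalue convergence} are in hand.
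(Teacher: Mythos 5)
Your proof is correct and is exactly the argument the paper intends: the paper states this lemma as an immediate consequence of the identity \eqref{trunc-error} and the eigenvalue decay in Lemma \ref{lemma eigenvalue convergence}, which is precisely the tail-sum/integral-comparison computation you carry out (and your explicit evaluation $c_1\int_N^{\infty}e^{-c_2x^{1/2}}\,\mathrm{d}x=\tfrac{2c_1}{c_2^2}(1+c_2N^{1/2})e^{-c_2N^{1/2}}$ reproduces the displayed constant exactly). You are also right to flag the one point of friction: that displayed quantity literally bounds $||\phi-\phi_N||_{\widetilde{0}}^2$ rather than $||\phi-\phi_N||_{\widetilde{0}}$, so after taking square roots the exponent should read $e^{-c_2N^{1/2}/2}$ with an adjusted prefactor --- a minor imprecision in the paper's statement, not a gap in your argument.
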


 To estimate $||\phi-\phi_N||_{\widetilde{\infty}}$,
we make the following assumption:
\begin{assum}
\label{assumption of Y_n uniformly bounded}
The random variables $\{Y_n(\theta)\}_{n=1}^{\infty}$  in the K-L expansion  are independent and  uniformly bounded with
\begin{equation*}
||Y_n(\theta)||_{L^{\infty}(\Omega)} \leq C_Y,~~~~~  \forall n \geq 1,
\end{equation*}
where $C_Y$ is a positive constant.
\end{assum}
\begin{lem}\cite{K-L2005,K-L2006}
\label{lemma infty norm KL convergence}
Suppose Assumption \ref{assumption of Y_n uniformly bounded} holds.
 If
$\text{cov}[\phi]$ is piecewise analytic  on $D\times D$,
then there exist a constant $c>0$  such that, for any $s>0$, it holds
\begin{equation}
||\phi-\phi_N||_{\widetilde{\infty}}\leq C e^{-c(1/2-s)N^{1/2}}, \forall N \geq 1,
\end{equation}
where $C$ is a   positive constant  depending on $s,c, \text{cov}[\phi] $ and $J$ given in Definition \ref{D3.1}.
If
$\text{cov}[\phi]$ is piecewise smooth on $D\times D$, then for any
$t>0, r>0$, it holds
\begin{equation}
||\phi-\phi_N||_{\widetilde{\infty}}\leq C' N^{1-t(1-r)/2}, \forall N \geq 1,
\end{equation}
where $C'$ is a positive constant  depending on $t, r, \text{cov}[\phi] $ and $J$.
\end{lem}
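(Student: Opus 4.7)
The plan is to control the tail $\phi-\phi_N = \sum_{n>N}\sqrt{\lambda_n}\,b_n(\textbf{x})\,Y_n(\theta)$ in the $||\cdot||_{\widetilde{\infty}}$ norm by separating the random factors from the deterministic ones. Using Assumption \ref{assumption of Y_n uniformly bounded} and the triangle inequality pointwise in $(\theta,\textbf{x})$, one gets $||\phi-\phi_N||_{\widetilde{\infty}} \leq C_Y \sup_{\textbf{x}\in D}\sum_{n=N+1}^{\infty}\sqrt{\lambda_n}\,|b_n(\textbf{x})|$, so the estimate reduces to a purely deterministic tail series in which the eigenvalue decay from Lemma \ref{lemma eigenvalue convergence} must be balanced against the growth of $\|b_n\|_{L^\infty(D)}$.

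Next I would extract a pointwise bound on the eigenfunctions. From the integral equation \eqref{integral equation cov}, Cauchy--Schwarz together with the orthonormality in \eqref{sum of lambda} gives the crude $|b_n(\textbf{x})|\leq \lambda_n^{-1}\|\mathrm{cov}[\phi](\textbf{x},\cdot)\|_{L^2(D)}$, which loses too much because of the $\lambda_n^{-1}$. The refined bound, which is the technical heart of the statement, exploits the piecewise regularity of $\mathrm{cov}[\phi]$ given by Definition \ref{D3.1}: on each subdomain $D_j$ the eigenfunction $b_n$ inherits the same piecewise analytic/smooth regularity, and a Sobolev embedding together with an interpolation argument, with constants tracked explicitly in $n$, yields $\|b_n\|_{L^\infty(D)}\lesssim n^{\alpha}$ in the analytic case and $\|b_n\|_{L^\infty(D)}\lesssim n^{\beta}$ in the smooth case, for mild exponents $\alpha,\beta$ depending on $J$ and the regularity. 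This is precisely the analysis carried out in \cite{K-L2005,K-L2006}, which I would invoke rather than redo.

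Combining these with Lemma \ref{lemma eigenvalue convergence}, I would finish by a direct tail-summation. In the analytic case, $\sum_{n>N}\sqrt{\lambda_n}\|b_n\|_{L^\infty(D)} \lesssim \sum_{n>N} n^{\alpha}\,e^{-(c_2/2)n^{1/2}}$; for any $s\in(0,1/2)$, the algebraic factor $n^\alpha$ is absorbed into a loss of $s$ in the exponent, yielding $C e^{-c(1/2-s)N^{1/2}}$. In the piecewise smooth case, Lemma \ref{lemma eigenvalue convergence} gives $\lambda_n\lesssim n^{-t}$ for arbitrarily large $t$, so $\sqrt{\lambda_n}\|b_n\|_{L^\infty(D)}\lesssim n^{-t/2+\beta}$; choosing $r>0$ so that the resulting tail sum is of order $N^{1-t(1-r)/2}$ (with $r$ absorbing $\beta$) produces the claimed bound. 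The main obstacle is the second step, namely the explicit $n$-dependent $L^\infty$ control of the eigenfunctions; once that is granted, the first and third steps are essentially elementary dominated-convergence/summation computations.
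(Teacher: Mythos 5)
The paper gives no proof of this lemma at all: it is quoted verbatim from \cite{K-L2005,K-L2006}, so the "paper's approach" is simply to cite those references. Your reconstruction (reduce to a deterministic tail via Assumption \ref{assumption of Y_n uniformly bounded}, invoke the cited works for the $n$-dependent $L^\infty$ control of the eigenfunctions, then sum the tail using Lemma \ref{lemma eigenvalue convergence}) is a correct outline of the argument in those references and defers the one genuinely hard step to the same sources the paper relies on, so it is consistent with the paper's treatment.
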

\begin{rem}
We note that  we need to solve the integral equation
(\ref{integral equation cov}) to obtain  the K-L expansion \eqref{K-L}.  For some special covariance functions, the equation can be solved analytically \cite{book stochastic},  but for more general cases    numerical methods are required \cite{K-L2005,implementation of K-L2002,K-L2006}. 

\end{rem}
\subsection{Finite dimensional  approximations of $\widetilde{E}$, $\textbf{f}$, $\textbf{g}$}
In this section, we use  the K-L expansion to approximate $\widetilde{E}$, $\textbf{f}$ and  $\textbf{g}$.

For  $\widetilde{E}$, assume its  truncated  K-L expansion is of the form
\begin{equation}
\label{truncated K-L}
 \widetilde{E}_N(\textbf{x},\theta)=\widetilde{E}_N(\textbf{x},Y_1(\theta),...,Y_N(\theta))= E[\widetilde{E}](\textbf{x})+\sum^{N}_{n=1} \sqrt{\widetilde{\lambda}_n}\widetilde{b}_n(\textbf{x})Y_n(\theta),
\end{equation}
where $\{(\widetilde{\lambda}_n, \widetilde{b}_n(\textbf{x}))\}_{n=1}^{N}$
 and $\{Y_n(\theta)\}_{n=1}^N$ are the corresponding   eigenpairs and random variables, respectively.

  As for $\textbf{f}=(f_1, f_2)^T$ and  $\textbf{g}=(g_1, g_2)^T$, we need to apply the K-L expansion to  each of their components.
 In this paper,  following  similar  ways as in \cite{babuska 2004,collocation}   to avoid use of more notations, we assume  the truncated K-L expansions   of $\textbf{f}$ and  $\textbf{g}$ take the following  forms:
 \begin{equation}\small
\label{truncated K-L of f}
\textbf{f}_{N}(\textbf{x},\theta)=\textbf{f}_{N}(\textbf{x},Y_1(\theta),...,Y_N(\theta))=\left( \begin{array}{c} f_{1N} \\ f_{2N}\end{array}\right)=\left( \begin{array}{c} E[f_1](\textbf{x}) \\ E[f_2](\textbf{x})\end{array}\right)+\sum^{N}_{n=1} \left( \begin{array}{c} \sqrt{\widehat{\lambda}_{1n}}\widehat{b}_{1n}(\textbf{x}) \\ \sqrt{\widehat{\lambda}_{2n}}\widehat{b}_{2n}(\textbf{x})\end{array}\right)Y_n(\theta),
\end{equation}
 \begin{equation}\small
\label{truncated K-L of g}
\textbf{g}_{N}(\textbf{x},\theta)=\textbf{g}_{N}(\textbf{x},Y_1(\theta),...,Y_N(\theta))= \left( \begin{array}{c} g_{1N} \\ g_{2N}\end{array}\right)=\left( \begin{array}{c} E[g_1](\textbf{x}) \\ E[g_2](\textbf{x})\end{array}\right)+\sum^{N}_{n=1} \left( \begin{array}{c} \sqrt{\overline{\lambda}_{1n}} \overline{b} _{1n} (\textbf{x})\\ \sqrt{\overline{\lambda}_{2n}}\overline{b}_{2n}(\textbf{x})\end{array}\right)Y_n(\theta),
\end{equation}
where $\{(\widehat{\lambda}_{in}, \widehat{b}_{in}(\textbf{x}))\}_{n=1}^{N}$, $\{(\overline{\lambda}_{in}, \overline{b}_{in}(\textbf{x}))\}_{n=1}^{N}$,$i=1,2$
are the corresponding  eigenpairs.
\begin{rem}
\label{remark 1}
In practice,  the Young's modulus $\widetilde{E}$, the body force $\textbf{f}$  and the surface load $\textbf{g}$ may be independent. In such cases, the random variables $\{Y_n(\theta)\}_{n=1}^N$ in the truncated K-L expansions \eqref{truncated K-L}-\eqref{truncated K-L of g} for $\widetilde{E}, f_1,f_2, g_1, g_2$ may be different from each other.
However, the analysis of this paper still applies to these cases.
\end{rem}

\subsection{Truncated mixed formulations }
By replacing ${\widetilde{E}}, \textbf{f}, \textbf{g}$ with their truncated forms
${\widetilde{E}_{N}}, \textbf{f}_N, \textbf{g}_N$  in the bilinear form $a(\cdot,\cdot)$, given in \eqref{definition of a( , )},  and the linear form $\ell(\cdot)$, given in \eqref{definition of l( , )},
we can obtain  the following modified mixed variational formulations for the weak problem \eqref{continous formulation}:
find $(\bm{\sigma}_N,\textbf{u}_N)\in L^2_P(\Omega;~ \Sigma_D)\times L^2_P(\Omega ;  ~{V_D})$ such that
\begin{equation}
\label{truncated continous formulation}
\left\{
    \begin{array}{ll}
        a_N(\bm{\sigma}_N,\bm{\tau})-b(\bm{\tau}, \textbf{u}_N)=0,  &       \forall \bm{\tau}\in L^2_P(\Omega;~ \Sigma_D),  \\
        b(\bm{\sigma}_N, \textbf{v})=\ell_N(\textbf{v}),    &       \forall \textbf{v}\in L^2_P(\Omega ;  ~{V_D}).
       \end{array}
       \right.
       \end{equation}
We recall that $\{Y_n(\theta)\}_{n=1}^N$ are the random variables used in the K-L expansions of $\widetilde{E}$, $\textbf{f}$ and  $\textbf{g}$, which are assumed to satisfy Assumption \ref{assumption of Y_n uniformly bounded}. In what follows we denote
\begin{equation}\label{vector}
Y:=(Y_1, Y_2,...,Y_N),\quad \Gamma_n:=Y_n(\Omega)\subset R, \quad \Gamma:=\prod_{n=1}^{N}\Gamma_n,
\end{equation}
 and let  $\rho: \Gamma \rightarrow R$ be the joint probability density function of random vector $Y$ with
 $\rho \in L^{\infty}(\Gamma)$.
According to  Doob-Dynkin lemma \cite{Doob Dynkin lemma},  the weak solution of the modified problem (\ref{truncated continous formulation})  can be described  by  the  random vector $Y$ as
$$\textbf{u}_N(\textbf{x}, \theta)= \textbf{u}_N(\textbf{x},Y),\quad
\bm{\sigma}_N(\textbf{x},\theta)= \bm{\sigma}_N(\textbf{x},Y),$$
and, by denoting  $\textbf{y}:=(y_1,y_2,\cdots,y_N)$,  the corresponding strong formulation for  (\ref{truncated continous formulation})  is of the form
\begin{equation}
\label{deterministic problem}
\left\{
    \begin{array}{ll}
  -{\bf div} {\bm{\sigma}_N}(\textbf{x},\textbf{y})= \textbf{f}_N(\textbf{x},\textbf{y}), & \forall ( \textbf{x} , \textbf{y} ) \in D \times \Gamma ,\\
 \bm{\sigma}_N(\textbf{x},\textbf{y}) = \widetilde{E}_N \textbf{C}~\epsilon(\textbf{u}_N(\textbf{x},\textbf{y})),  &\forall ( \textbf{x} , \textbf{y} ) \in D \times \Gamma, \\
      \textbf{u}_N(\textbf{x},\textbf{y}) =0,   &\forall ( \textbf{x} , \textbf{y} ) \in \partial D_0 \times \Gamma,\\
     {\bm{\sigma}}_N(\textbf{x},\textbf{y})\textbf{n}   =\textbf{g}_N(\textbf{x},\textbf{y}),&\forall ( \textbf{x} , \textbf{y} ) \in \partial D_1 \times \Gamma.
       \end{array}
       \right.
\end{equation}

Recall that $\rho: \Gamma \rightarrow R$ is the joint probability density function of random vector $Y$. We introduce the    weighted $L^2$-space
\begin{equation}\label{L-rho}
L^2_{\rho}(\Gamma):=\{v: \Gamma\rightarrow R ~|~ \int_{\Gamma}\rho v^2 \mathrm{d}\textbf{y}<+\infty\}.
\end{equation}
We note that from  the norm definition \eqref{norm-m-tilde} it follows
\begin{equation}
||w||_{\widetilde{m}}^2=\int_\Gamma \rho(\textbf{y}) ||w(\cdot,\textbf{y})||_m^2d\textbf{y}= ||w||_{L^2_{\rho}(\Gamma)\otimes {H^m(D)}}^2, \quad \forall  w\in L^2_{\rho}(\Gamma)\otimes {H^m(D)}.
\end{equation}
It is easy to see that  the modified problem (\ref{truncated continous formulation}) is    equivalent to   the following
 deterministic   variational problem:
find $(\bm{\sigma}_N,\textbf{u}_N)\in (L^2_{\rho}(\Gamma)\otimes {\Sigma_D})\times (L^2_{\rho}(\Gamma)\otimes {V_D})$ such that
\begin{equation}
\label{truncated continous formulation N}
\left\{
    \begin{array}{ll}
        a_N(\bm{\sigma}_N,\bm{\tau})-b_N(\bm{\tau}, \textbf{u}_{\rho})=0,&\forall \bm{\tau}\in L^2_{\rho}(\Gamma)\otimes {\Sigma_D},  \\
        b_N(\bm{\sigma}_N, \textbf{v})=\ell_N(\textbf{v}),    &     \forall \textbf{v}\in L^2_{\rho}(\Gamma)\otimes {V_D},
       \end{array}
       \right.
       \end{equation}
where 
\begin{equation}
\label{a_N definition}
a_N(\bm{\sigma}_N,\bm{\tau}):
=\int_{\Gamma}\rho(\textbf{y})\int_D\frac{1}{\widetilde{E}_{N}} \cdot \bm{\sigma}_N : {{\textbf{C}}}^{-1}\bm{\tau}\mathrm{d}\textbf{x} \mathrm{d}\textbf{y},
\end{equation}
\begin{equation}
\label{b_N definition}
b_N(\bm{\tau},\textbf{u}_N):=\int_\Gamma\rho(\textbf{y})\int_D \bm{\tau} : \epsilon(\textbf{u}_N)\mathrm{d}\textbf{x} \mathrm{d}\textbf{y},
\end{equation}
\begin{equation}
\label{l_N  definition}
\ell_N(\textbf{v}):=\int_\Gamma\rho(\textbf{y})\int_D \textbf{f}_N\textbf{v}\mathrm{d}\textbf{x} \mathrm{d}\textbf{y} +\int_\Gamma\rho(\textbf{y})\int_{\partial D_1} \textbf{g}_N\cdot\textbf{v}\mathrm{d}s \mathrm{d}\textbf{y}.
\end{equation}

The significance of the form (\ref{truncated continous formulation N}) lies in that it  turns the original formulation (\ref{continous formulation}) into a deterministic one with  perturbations of the Young's modulus $\widetilde{E}$, the body force $\textbf{f}$  and the surface load $\textbf{g}$.  Lemma  \ref{propo truncated N} shows, if  the perturbations or the truncated errors   are small enough,   we can    numerically solve the deterministic problem (\ref{truncated continous formulation N}) so as to obtain an approximate solution of the original problem (\ref{continous formulation}).

\begin{rem}
In some applications  it may be more efficient to numerically solve the problem
(\ref{truncated continous formulation N}) just in a subdomain  $\widehat{\Gamma} \subset \Gamma$, as, of course, will cause that the corresponding approximation solution has no value in $\Gamma \setminus {\widehat{\Gamma}}$.
\end{rem}


 \begin{lem}
 \label{propo truncated N} Suppose that Assumption \ref{assumption of Y_n uniformly bounded} holds and  the  covariance function, $cov[ \widetilde{E}]$, of $\widetilde{E}$ is piecewise smooth (cf. Definition \ref{D3.1}). Then,  for sufficiently large $N$, the modified weak problem (\ref{truncated continous formulation}), or its equivalent problem \eqref{truncated continous formulation N},  admits a unique solution $(\bm{\sigma}_N, \textbf{u}_N)\in (L^2_{\rho}(\Gamma)\otimes {\Sigma_D}) \times ( L^2_{\rho}(\Gamma)\otimes {V_D})$ such that
 \begin{equation}
\label{truncated N error}
||\bm{\sigma}-\bm{\sigma}_N||_{\widetilde{0}}+|\textbf{u}-\textbf{u}_N|_{\widetilde{1}}\lesssim ||{\widetilde{E}}-{\widetilde{E}_{N}}||_{\widetilde{\infty}}\cdot||\bm{\sigma}||_{\widetilde{0}} +||\textbf{f}-\textbf{f}_{N}||_{\widetilde{0}}+||\textbf{g}-\textbf{g}_{N}||_{\widetilde{0},\partial D_1},
\end{equation}
where
  $( \bm{\sigma}, \textbf{u})\in L^2_P(\Omega;~ \Sigma_D) \times L^2_P(\Omega ;  ~{V_D})$  is the solution of the weak problem (\ref{continous formulation}).

Moreover, (i) if the  covariance functions  $cov[\widetilde{E}]$, $cov[\textbf{f}]$ and $cov[\textbf{g}]$
are piecewise analytic, then there exists a constant $r>0$, and a constant $C_r>0$ depending only on $cov[\widetilde{E}]$, $cov[\textbf{f}]$, $cov[\textbf{g}]$ and $r$,
such that
\begin{equation}
\label{6}
||\bm{\sigma}-\bm{\sigma}_N||_{\widetilde{0}}+|\textbf{u}-\textbf{u}_N|_{\widetilde{1}}\lesssim  C_rN^{1/2} e^{- r N^{1/2}}.
\end{equation}
(ii) If $cov[\textbf{f}]$ and $cov[\textbf{g}]$
are piecewise smooth,  then for any $s>0$, there exists
$C_s>0$ depending only on $cov[\widetilde{E}]$, $cov[\textbf{f}]$, $cov[\textbf{g}]$ and $s$, such that
\begin{equation}
\label{7}
||\bm{\sigma}-\bm{\sigma}_N||_{\widetilde{0}}+|\textbf{u}-\textbf{u}_N|_{\widetilde{1}}\lesssim  C_s N^{-s}.
\end{equation}
\end{lem}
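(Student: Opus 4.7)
The plan is to proceed in three stages: first establish well-posedness of the truncated mixed problem uniformly in $N$, then derive a Strang-type perturbation estimate yielding \eqref{truncated N error}, and finally plug in the convergence rates from Lemmas \ref{3.2} and \ref{lemma infty norm KL convergence} to obtain \eqref{6} and \eqref{7}.

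For the well-posedness, the first step is to observe that because $cov[\widetilde{E}]$ is piecewise smooth and Assumption \ref{assumption of Y_n uniformly bounded} holds, Lemma \ref{lemma infty norm KL convergence} yields $\|\widetilde{E}-\widetilde{E}_N\|_{\widetilde{\infty}}\to 0$. Choosing $N$ large enough that $\|\widetilde{E}-\widetilde{E}_N\|_{\widetilde{\infty}}\le e_{min}/2$ forces $e_{min}/2 \le \widetilde{E}_N(\textbf{x},\theta)\le 2e_{max}$ a.e., so $\widetilde{E}_N$ satisfies an analogue of \eqref{E assumption} with constants independent of $N$. Then the kernel-coercivity (\textbf{A}) and inf-sup condition (\textbf{B}) for $(a_N,b_N)$ follow by repeating the proof of Theorem 2.1 verbatim, using Theorem 2.1 of \cite{yu guo zhu} pointwise in $\theta$; since all constants depend only on $e_{min},e_{max}$ and the Poisson ratio, the stability is uniform both in $N$ and in $\lambda$. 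Standard mixed finite element theory then gives existence, uniqueness, and a stability bound analogous to \eqref{stability}.

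Next, I would subtract the two saddle point systems \eqref{continous formulation} and \eqref{truncated continous formulation} and rewrite the result so that the differences $\bm{\sigma}-\bm{\sigma}_N$ and $\textbf{u}-\textbf{u}_N$ appear on the left with the operator $(a_N,b)$, yielding
\begin{equation*}
\begin{array}{l}
a_N(\bm{\sigma}-\bm{\sigma}_N,\bm{\tau})-b(\bm{\tau},\textbf{u}-\textbf{u}_N)=(a_N-a)(\bm{\sigma},\bm{\tau}),\\[2pt]
b(\bm{\sigma}-\bm{\sigma}_N,\textbf{v})=(\ell-\ell_N)(\textbf{v}).
\end{array}
\end{equation*}
The consistency residual on the first line can be rewritten as $E\big[\int_D \frac{\widetilde{E}-\widetilde{E}_N}{\widetilde{E}\,\widetilde{E}_N}\,\bm{\sigma}:\textbf{C}^{-1}\bm{\tau}\,\mathrm{d}\textbf{x}\big]$, and using the uniform lower bounds $\widetilde{E},\widetilde{E}_N\ge e_{min}/2$ established in stage one, it is controlled by $\|\widetilde{E}-\widetilde{E}_N\|_{\widetilde{\infty}}\,\|\bm{\sigma}\|_{\widetilde{0}}\,\|\bm{\tau}\|_{\widetilde{0}}$. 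The residual on the second line is immediately bounded by $(\|\textbf{f}-\textbf{f}_N\|_{\widetilde{0}}+\|\textbf{g}-\textbf{g}_N\|_{\widetilde{0},\partial D_1})\,|\textbf{v}|_{\widetilde{1}}$. Invoking the uniform stability of $(a_N,b)$ from stage one then delivers \eqref{truncated N error}.

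For the final stage, I would apply the tail estimates termwise. In case (i) the three covariance functions are piecewise analytic, so Lemma \ref{3.2} gives $\|\textbf{f}-\textbf{f}_N\|_{\widetilde{0}}$ and $\|\textbf{g}-\textbf{g}_N\|_{\widetilde{0},\partial D_1}$ of order $(1+cN^{1/2})e^{-cN^{1/2}}$, while Lemma \ref{lemma infty norm KL convergence} gives $\|\widetilde{E}-\widetilde{E}_N\|_{\widetilde{\infty}}\lesssim e^{-c(1/2-s)N^{1/2}}$ for arbitrary small $s>0$; choosing $r$ smaller than all exponents (e.g.\ $r<\min\{c_2,c(1/2-s)\}$) and using $\|\bm{\sigma}\|_{\widetilde{0}}\lesssim\|\textbf{f}\|_{\widetilde{0}}+\|\textbf{g}\|_{\widetilde{0},\partial D_1}$ from \eqref{stability} absorbs the polynomial prefactor into $C_r N^{1/2}e^{-rN^{1/2}}$. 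In case (ii), Lemma \ref{3.2} gives $N^{-s}$ decay for the load terms, while Lemma \ref{lemma infty norm KL convergence} for piecewise smooth $cov[\widetilde{E}]$ yields $N^{1-t(1-r)/2}$, which can be made $\lesssim N^{-s}$ by choosing $t$ sufficiently large; this produces \eqref{7}.

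The step I expect to be most delicate is the first one: verifying that the inf-sup pair $(a_N,b)$ is uniformly stable with constants independent of both $N$ and $\lambda$. The key subtlety is that the kernel-coercivity proof relies on the pointwise-in-$\theta$ stress-only coercivity result from \cite{yu guo zhu}, which requires an $L^\infty$ lower bound on the coefficient; this is precisely why $L^\infty$ convergence of $\widetilde{E}_N$ (hence Assumption \ref{assumption of Y_n uniformly bounded} and the piecewise smoothness of $cov[\widetilde{E}]$) is needed for sufficiently large $N$, whereas weaker $L^2$ convergence would not suffice.
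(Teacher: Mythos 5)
Your proposal is correct and follows essentially the same route as the paper: establish uniform bounds on $\widetilde{E}_N$ via Lemma \ref{lemma infty norm KL convergence} to inherit the stability conditions from the untruncated problem, subtract the two saddle-point systems to get the same error equations \eqref{1}, bound the consistency residuals, and finish with the tail estimates of Lemmas \ref{3.2} and \ref{lemma infty norm KL convergence}. You actually supply more detail than the paper does (the explicit rewriting of $a_N-a$ and the choice of exponents in cases (i)--(ii) are left implicit there), so no further comparison is needed.
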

 \begin{proof}
We first show the modified problem (\ref{truncated continous formulation}) is well-posed. Since the uniform stability conditions for the bilinear form $b(\cdot,\cdot)$ and the linear form $\ell_N(\cdot)$ hold,     it suffices to show  that $\widetilde{E}_N$  is, for sufficiently large $N$, uniformly bounded  with lower bound away from zero a.e. in $D\times \Omega$. In view of Lemma \ref{lemma infty norm KL convergence} and the assumption \eqref{E assumption},  there exists a positive integer  $N_0$ such that,
  for any $N > N_0$, it holds
 \begin{equation}
 \label{truncated E bounded}
 e_{min}' \leq \widetilde{E}_N \leq e_{max}' ~~~a.e.~~~~ \text{in} ~ D\times \Omega,
 \end{equation}
where   $e_{min}'$ and $e_{max}'$ are two  positive constants depending only on  the bounds of $\widetilde{E}$, i.e. $e_{min}$ and $e_{max}$ in \eqref{E assumption}.
 Thus, the corresponding uniform stability conditions  of the bilinear form $a_N(\cdot,\cdot)$ follow from those  of $a(\cdot,\cdot)$.  As a result,  the weak problem (\ref{truncated continous formulation}) admits a unique solution $(\bm{\sigma}_N, \textbf{u}_N)\in L^2_P(\Omega;~ \Sigma_D) \times L^2_P(\Omega ;  ~{V_D})$ with the   stability result
 \begin{equation}
\label{stability N}
||\bm{\sigma}_N||_{\widetilde{0}}+|\textbf{u}_N|_{\widetilde{1}}\lesssim ||\textbf{f}_N||_{\widetilde{0}}+||\textbf{g}_N||_{\widetilde{0},\partial D_1}
\end{equation}
for $N > N_0$.

Next we turn to derive the estimate \eqref{truncated N error}. Subtracting the corresponding  equations in (\ref{continous formulation}) and (\ref{truncated continous formulation}),
 we have
\begin{equation}
\label{1}
\left\{
    \begin{array}{ll}
    a_N(\bm{\sigma}-\bm{\sigma}_N,\bm{\tau})-b(\bm{\tau},\textbf{u}- \textbf{u}_N )=a_N(\bm{\sigma},\bm{\tau})-a(\bm{\sigma},\bm{\tau}),                        &  \forall \bm{\tau}\in L^2_P(\Omega;~ \Sigma_D),  \\
        b(\bm{\sigma}-\bm{\sigma}_N, \textbf{v})=\ell(\textbf{v})-\ell_N(\textbf{v}),   &     \forall \textbf{v}\in L^2_P(\Omega ;  ~{V_D}).
       \end{array}
       \right.
       \end{equation}
Then the desired estimate \eqref{truncated N error} follows from the corresponding stability conditions.
%

  By Lemmas \ref{3.2}-\ref{lemma infty norm KL convergence} and the  estimate \eqref{truncated N error}, we immediately obtain the estimates (\ref{6})-(\ref{7}).
  \end{proof}

\section{  Stochastic hybrid stress
 finite element methods  }
 In this section, we shall consider two types of stochastic finite element methods for the truncated deterministic variational
problem \eqref{truncated continous formulation N}: $k\times h$ version and $p\times h$ version. We use the  PS hybrid stress quadrilateral finite element  \cite{19 yu} to  discretize the space field and $k-$version/$p-$version finite elements to discretize  the stochastic field.

 For convenience we assume that the spacial field $D$ is a convex polygon and the stochastic filed $\Gamma=\prod_{n=1}^N\Gamma_n$ is bounded (cf. Assumption \ref{assumption of Y_n uniformly bounded}).

 \subsection{Hybrid stress  finite element spaces on the spatial field}


 Let $\mathcal{T}_h$ be a partition of $\bar D$ by conventional quadrilaterals with the mesh size $h:=max_{T\in\mathcal{T}_h} h_T$,  where
$h_T$ is   the diameter of quadrilateral  $T\in \mathcal T_h$. Let $A_i(x^{(i)}_1,x^{(i)}_2),1\leq i \leq 4$, be the four vertices of T, and $T_i$   the sub-triangle of T with vertices $A_{i-1} ,A_i,A_{i+1}$ (the index of $A_i$ is modulo 4). We assume that the partition $\mathcal{T}_h$ satisfies the following "shape-regularity" hypothesis : there exist a constant $\zeta>2$ independent of h such that, for all $T\in\mathcal{T}_h$, it holds  ~~~~~~~~~~~~~~~~
 \begin{equation}
 \label{domain regular}
 h_T \leqslant \zeta \rho_T,
 \end{equation}
where
$\rho_T:=min_{1\leq i \leq 4}$ \{diameter  of  circle  inscribed in $ T_i$\}.
\begin{figure}[!h]
\begin{center}
\setlength{\unitlength}{0.7cm}
\begin{picture}(10,6)
\put(0,1.5){\line(1,0){3}}        \put(3,1.5){\line(0,1){3}}
\put(0,1.5){\line(0,1){3}} \put(0,4.5){\line(1,0){3}}
\put(0,1.5){\circle*{0.15}}      \put(0,4.5){\circle*{0.15}}
\put(3,4.5){\circle*{0.15}}       \put(0,1.5){\circle*{0.15}}
\put(3,1.5){\circle*{0.15}} \put(-0.5,0.8){\bf{$\hat{A}_{1}$}}
\put(3,0.8){\bf{$\hat{A}_{2}$}}    \put(3,4.7){\bf{$\hat{A}_{3}$}}
\put(-0.5,4.7){\bf{$\hat{A}_{4}$}}  \put(-1,3){\vector(1,0){5}}
\put(1.5,0.5){\vector(0,1){5}} \put(4.2,2.9){$\widehat{x_1}$}
\put(1.5,5.6){$\widehat{x_2}$}

\put(1.6,1.1){-1}                 \put(1.6,4.6){1}
\put(-0.4,3.1){-1} \put(3.1,3.1){1}

\put(5,3){\vector(1,0){1.5}}      \put(5.5,3.2){$F_T$}

\put(8,2.2){\line(6,1){4}}        \put(8,2.2){\line(1,5){0.4}}
\put(8.4,4.2){\line(2,1){2}} \put(10.4,5.2){\line(2,-3){1.6}}
\put(8,2.2){\circle*{0.15}}      \put(8.4,4.2){\circle*{0.15}}
\put(10.4,5.2){\circle*{0.15}}    \put(11.95,2.85){\circle*{0.15}}
\put(7.5,1.9){\bf{$A_{1}$}} \put(12,2.5){\bf{$A_{2}$}}
\put(10.4,5.4){\bf{$A_{3}$}}       \put(8.0,4.5){\bf{$A_{4}$}}
\put(7,1.7){\vector(1,0){5}}      \put(7.5,0.9){\vector(0,1){4}}
\put(12.1,1.4){$x_1$} \put(7.5,5.0){$x_2$}

\end{picture}
\end{center}
\vspace{-1cm} \caption{The mapping $F_{T}$}\label{rr}
\end{figure}
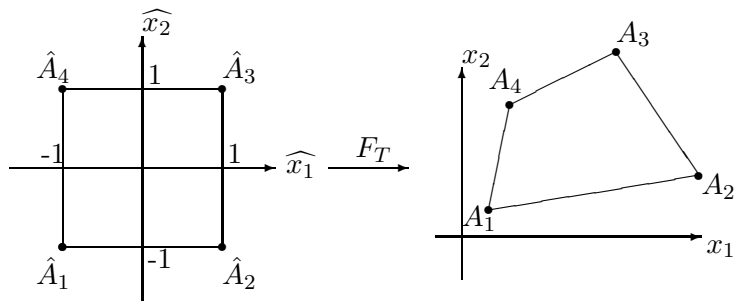

Let $\widehat{T}=[-1,1]\times[-1,1]$ be the reference square with vertices $\widehat{A}_i,1\leq i \leq 4$(Fig.1). Then exists a unique invertible mapping $F_T$ that maps $\widehat{T}$ onto T with  $F_T(\widehat{A}_i)=A_i,1\leq i\leq4$.
The isoparametric bilinear mapping $(x_1,x_2)=F_T( \widehat{x}_1, \widehat{x}_2)$ is given by
\begin{equation}
\label{transform}
x_1=a_0+a_1 \widehat{x}_1+a_2 \widehat{x}_1\widehat{x}_2+a_3 \widehat{x}_2 ,~~~~~~~~  x_2=b_0+b_1 \widehat{x}_1+b_2\widehat{x}_1 \widehat{x}_2+b_3\widehat{x}_2 ,
\end{equation}
where   $ \widehat{x}_1, \widehat{x}_2 \in [-1,1]$ are the local isoparametric  coordinates, and
\begin{displaymath}
\left (\begin{array}{cccc}a_0 & b_0  \\ a_1 &
b_1 \\ a_2 & b_2 \\a_3 & b_3  \end{array} \right ):=\frac{1}{4}\left (
\begin{array}{cccc} 1& 1 & 1 & 1 \\ -1 & 1 & 1 & -1 \\ 1 & -1 & 1 & -1 \\ -1 & -1 & 1 & 1
\end{array}\right ) \left ( \begin{array}{cccc} x_1^{(1)} & x_2^{(1)} \\ x_1^{(2)} & x_2^{(2)} \\ x_1^{(3)} & x_2^{(3)} \\ x_1^{(4)} & x_2^{(4)} \end{array}\right ) .
\end{displaymath}

In Pian-SumiharaÕs hybrid stress finite element (abbr. PS element) method for   deterministic plane elasticity problems,  the piecewise isoparametric bilinear interpolation is used for the displacement approximation , namely the   displacement approximation space ${V_D}_h\subset V_D$ is chosen as
\begin{equation}
\label{Q_h}
{V_D}_h:=\{\textbf{v}\in {V_D} : \widehat{\textbf{v}}=v|_T o F_T \in span\{1,\hat x_1,\hat x_2,\hat x_1\hat x_2\}^2,~~\forall\ T\in \mathcal{T}_h\}.
\end{equation}
In other words ,for $\textbf{v}=(\upsilon , \omega)^T \in V_h$ with nodal values $\textbf{v}(A_i)=(\upsilon_i , \omega_i)^T $ on T, $\widehat{\textbf{v}}$ is of the form
$$\widehat{\textbf{v}}=
\left( \begin{array}{c} V_0+V_1 \widehat{x}_1+V_2 \widehat{x}_1\widehat{x}_2+V_3 \widehat{x}_2\\ W_0+W_1 \widehat{x}_1+W_2 \widehat{x}_1\widehat{x}_2+W_3 \widehat{x}_2 \end{array}\right),$$
where
\begin{displaymath}
\left (\begin{array}{cccc}V_0 & W_0  \\ V_1 &
W_1 \\ V_2 & W_2 \\V_3 & W_3  \end{array} \right )=\frac{1}{4}\left (
\begin{array}{cccc} 1& 1 & 1 & 1 \\ -1 & 1 & 1 & -1 \\ 1 & -1 & 1 & -1 \\ -1 & -1 & 1 & 1
\end{array}\right ) \left ( \begin{array}{cccc} \upsilon_1 & \omega_1 \\ \upsilon_2 & \omega_2 \\ \upsilon_3 & \omega_3 \\\upsilon_4 & \omega_4 \end{array}\right ) .
\end{displaymath}

To describe  the stress approximation of PS element, we abbreviate the symmetric tensor $\bm{\tau}
=\left(\begin{array}{cc}\tau_{11} & \tau_{12}\\ \tau_{12} &
\tau_{22} \end{array}\right)$ to $\bm
\tau=(\tau_{11},\tau_{22},\tau_{12})^T$.  The 5-parameter stress mode of PS element takes  the following form on $\widehat{T}$:
\begin{equation}
\label{tau basis}
\widehat{\bm{\tau}}=\left(\begin{array}{c} \widehat{\tau}_{11} \\ \widehat{\tau}_{22} \\ \widehat{\tau}_{12}\end{array}\right)=\left(\begin{array}{ccccc}1 & 0 & 0 &  \widehat{x}_2 & \frac{a_3^2}{b_3^2} \widehat{x}_1 \\ 0 & 1 & 0 & \frac{b_1^2}{a_1^2} \widehat{x}_2&  \widehat{x}_1 \\ 0 & 0 & 1 & \frac{b_1}{a_1} \widehat{x}_2 & \frac{a_3}{b_3} \widehat{x}_1 \end{array}\right) {\bm{\beta}}^{\tau} ~~~~ \text{for}~ ~\bm{\beta}^{\tau}=(\beta^{\tau}_1 ,...,\beta^{\tau}_5)^T\in R^5.
\end{equation}

Then the corresponding    stress approximation space for the PS finite element is
\begin{equation}
\label{P_h}
{\Sigma_D}_h:=\{\bm{\tau} \in {\Sigma_D} : \widehat{\bm{\tau}}=\bm{\tau}|_T o F_T ~~\text{is}~~\text{of} ~~\text{form}~~(\ref{tau basis}),~\forall T \in \mathcal{T}_h\}.
\end{equation}

\subsection{Stochastic hybrid stress  finite element method: $k\times h$-version}\label{k-h}
This subsection  is devoted to the stability and a priori error analysis for the  $k\times h$-version   stochastic hybrid stress  finite element method ($k\times h$-SHSFEM).

\subsubsection{ $k\times h$-SHSFEM scheme}
We first use the same notations as in \cite{babuska 2004} to introduce  a $k$-version  tensor product  finite element space on the stochastic field $\Gamma=\prod_{n=1}^N\Gamma_n\subset R^N$.

Consider a partition of $\Gamma$ 
consisting of a finite number of disjoint $R^N$-boxes, $\gamma=\prod_{n=1}^N (a_n^{\gamma}, b_n^{\gamma})$ with $(a_n^{\gamma} , b_n^{\gamma})\subset \Gamma_n$ and   the mesh parameter   $k_n:=\max_{\gamma}|b_n^{\gamma}-a_n^{\gamma}|$ for $n=1,2,\cdots,N$.

 Let $\textbf{q}=(q_1,q_2,...,q_N)$ be a nonnegative integer muti-index. We define the $k-$version tensor product finite element space $Y_{\textbf{k}}^{\textbf{q}}$ as 
 \begin{equation}
 \label{definition  of Y_k^q}
 Y_{\textbf{k}}^{\textbf{q}}:=\otimes_{n=1}^N Y_{k_n}^{q_n},\quad Y_{k_n}^{q_n}:=\left\{\varphi: \Gamma_n\rightarrow R:
 \varphi |_{(a_n^{\gamma} , b_n^{\gamma})} \in span\{y_n^\alpha: \alpha=0,1,...,q_n\}, \forall \gamma\right\}.
\end{equation}

The     $k\times h$-SHSFEM  scheme   for the original weak problem (\ref{continous formulation}), or the modified weak problem  (\ref{truncated continous formulation N}), reads as:
find $(\bm{\sigma}_{kh}, \textbf{u}_{kh})\in  (Y_{\textbf{k}}^{\textbf{q}} \otimes {\Sigma_D}_h )\times  (Y_{\textbf{k}}^{\textbf{q}} \otimes {V_D}_h)  $  such that
\begin{equation}
\label{discrete formulation}
\left\{
    \begin{array}{ll}
        a_N(\bm{\sigma}_{kh},\bm{\tau}_{kh})-b_N(\bm{\tau}_{kh}, \textbf{u}_{kh})=0,
        &    \forall \bm{\tau}_{kh}\in  Y_{\textbf{k}}^{\textbf{q}} \otimes {\Sigma_D}_h, \\
        b_N(\bm{\sigma}_{kh}, \textbf{v}_{kh})=\ell_N(\textbf{v}_{kh}),    & \forall \textbf{v}_{kh}\in Y_{\textbf{k}}^{\textbf{q}} \otimes {V_D}_h.
      \end{array}
       \right.
\end{equation}
Here we recall that
$$ Y_{\textbf{k}}^{\textbf{q}} \otimes {\Sigma_D}_h =span \{ \varphi(\textbf{y})\bm\tau(\textbf{x}) : \varphi\in Y_{\textbf{k}}^{\textbf{q}} ,\bm\tau \in{\Sigma_D}_h \},$$
$$ Y_{\textbf{k}}^{\textbf{q}} \otimes {V_D}_h =span \{\varphi(\textbf{y})\textbf{v}(\textbf{x}) : \varphi\in Y_{\textbf{k}}^{\textbf{q}} , \textbf{v} \in{V_D}_h \},$$
and     ${V_D}_h$, ${\Sigma_D}_h$ are defined   in   \eqref{Q_h}, \eqref{P_h}, respectively.

\subsubsection{Stability   }

To show the  $k\times h$-SHSFEM scheme \eqref{discrete formulation} admits a unique solution, we need some stability conditions.
We note that the continuity of $a_N(\cdot,\cdot)$, $b_N(\cdot,\cdot)$ and  $\ell_N(\cdot)$ follows from their definitions.  Then, according to the theory of mixed methods  \cite{book hybrid}, it suffices to prove the following two discrete versions of the stability conditions.

$(\textbf{A}_h)$ Discrete Kernel-coercivity : for any $\bm{\tau}_{kh} \in Z_{kh}^0 :=\{\bm{\tau}_{kh} \in Y_{\textbf{k}}^{\textbf{q}} \otimes {\Sigma_D}_h : b_N(\bm{\tau}_{kh},\textbf{v}_{kh})=0, \ \forall\textbf{v}_{kh}\in Y_{\textbf{k}}^{\textbf{q}} \otimes {V_D}_h \}$
 , it holds:
\begin{equation}
\label{A_h}
 ||\bm{\tau}_{kh}||^2_{\widetilde{0}}\lesssim a_N(\bm{\tau}_{kh} , \bm{\tau}_{kh}).
\end{equation}

    $(\textbf{B}_h)$ Discrete inf-sup condition : \text{for} \text{any} $\textbf{v}_{kh}\in Y_{\textbf{k}}^{\textbf{q}} \otimes {V_D}_h$ , it holds
\begin{equation}
\label{B_h}
  |\textbf{v}_{kh}|_{\widetilde{1}} \lesssim  \sup_{0\neq \bm{\tau}_{kh} \in Y_{\textbf{k}}^{\textbf{q}} \otimes {\Sigma_D}_h} \frac{b_N(\bm{\tau}_{kh},\textbf{v}_{kh})}{||\bm{\tau}_{kh}||_{\widetilde{0}}}.
\end{equation}

To prove the stability condition $(\textbf{A}_h)$, we need the following lemma   \cite{yu guo zhu}:
\begin{lem}\label{lemma assumption}
 Assume that for any piecewise constant function $w$, i.e. $w\in L^2(D)$ with $w|_T=const$,  $\forall T \in \mathcal{T}_h$, there exists   $\textbf{v}\in V_{Dh}$ with
$$||w||^2_0\lesssim \int_D w\text{div} \textbf{v}~\mathrm{d}\textbf{x} ,\quad |\textbf{v}|^2_1\lesssim ||w||^2_0.$$
Then,
for  any  $\bm{\tau}_h \in \{\bm{\tau}_h \in \Sigma_{Dh}: \int_D \bm{\tau}_h : \epsilon(\textbf{v}_h)\mathrm{d}\textbf{x}=0,~~ \forall \textbf{v}_h \in  V_{Dh} \}$, it holds
$$||\bm{\tau}_h||_{0}^2\lesssim  \int_D\frac{1}{\widetilde{E}_{N}} \bm{\tau}_{h} : {{\textbf{C}}}^{-1}\bm{\tau}_{h}  \mathrm{d}\textbf{x}.$$
\end{lem}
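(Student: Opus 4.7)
The plan is to split $\bm\tau_h$ into its deviatoric and spherical parts, use the explicit form of $\mathbf{C}^{-1}$ to see that $\int_D \frac{1}{\widetilde{E}_N}\bm\tau_h:\mathbf{C}^{-1}\bm\tau_h\,d\mathbf{x}$ already controls the deviatoric part uniformly in $\lambda$, and then recover a $\lambda$-uniform control of the trace via the discrete kernel identity together with the hypothesised div-stability on $(V_{Dh},\Sigma_{Dh})$.

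Writing $\bm\tau_h = \bm\tau_h^D + \tfrac{1}{2}(\mathrm{tr}\,\bm\tau_h)\mathbf{I}$ gives $||\bm\tau_h||_0^2 = ||\bm\tau_h^D||_0^2 + \tfrac{1}{2}||\mathrm{tr}\,\bm\tau_h||_0^2$. Since $\mathbf{C}^{-1}$ depends only on $\nu$ and $\widetilde{E}_N$ is uniformly bounded above and below a.e.\ in $D\times\Omega$ by \eqref{truncated E bounded}, a direct algebraic computation of $\bm\tau:\mathbf{C}^{-1}\bm\tau$ yields
$$
\int_D \frac{1}{\widetilde{E}_N}\,\bm\tau_h:\mathbf{C}^{-1}\bm\tau_h\,d\mathbf{x} \gtrsim ||\bm\tau_h^D||_0^2 + \alpha(\nu)\,||\mathrm{tr}\,\bm\tau_h||_0^2,
$$
where $\alpha(\nu)>0$ but $\alpha(\nu)\to 0$ as $\nu\to 0.5$ (equivalently $\lambda\to\infty$). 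Thus the entire task reduces to proving a $\lambda$-independent estimate $||\mathrm{tr}\,\bm\tau_h||_0 \lesssim ||\bm\tau_h^D||_0$.

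To this end, let $w$ be the element-wise $L^2$-projection of $\mathrm{tr}\,\bm\tau_h$ onto piecewise constants, and invoke the hypothesis to pick $\mathbf{v}\in V_{Dh}$ with $||w||_0^2 \lesssim \int_D w\,\mathrm{div}\,\mathbf{v}\,d\mathbf{x}$ and $|\mathbf{v}|_1 \lesssim ||w||_0$. Substituting this $\mathbf{v}$ into the discrete kernel relation and decomposing $\bm\tau_h:\epsilon(\mathbf{v}) = \bm\tau_h^D:\epsilon(\mathbf{v}) + \tfrac12(\mathrm{tr}\,\bm\tau_h)\mathrm{div}\,\mathbf{v}$, one gets
$$
\int_D (\mathrm{tr}\,\bm\tau_h)\,\mathrm{div}\,\mathbf{v}\,d\mathbf{x} \;=\; -2\int_D \bm\tau_h^D:\epsilon(\mathbf{v})\,d\mathbf{x}.
$$
Splitting $w = \mathrm{tr}\,\bm\tau_h - (\mathrm{tr}\,\bm\tau_h - w)$ and combining with $|\mathbf{v}|_1\lesssim ||w||_0$ gives
$$
||w||_0^2 \lesssim \big(||\bm\tau_h^D||_0 + ||\mathrm{tr}\,\bm\tau_h - w||_0\big)\,||w||_0,
$$
so $||w||_0 \lesssim ||\bm\tau_h^D||_0 + ||\mathrm{tr}\,\bm\tau_h - w||_0$, and by Pythagoras $||\mathrm{tr}\,\bm\tau_h||_0 \lesssim ||\bm\tau_h^D||_0 + ||\mathrm{tr}\,\bm\tau_h - w||_0$.

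The remaining step, which I expect to be the main obstacle, is to bound the oscillatory residual $||\mathrm{tr}\,\bm\tau_h - w||_0$ by $||\bm\tau_h^D||_0$ uniformly in $h$ and $\lambda$. This is where the explicit 5-parameter PS stress mode \eqref{tau basis} must be exploited: a computation on the reference square $\widehat T$ using the isoparametric map \eqref{transform} shows that the oscillatory part of $\mathrm{tr}\,\bm\tau_h|_T$ and the deviatoric tensor $\bm\tau_h^D|_T$ depend linearly on the same free parameters $\beta^\tau_4, \beta^\tau_5$, with element-wise coefficients controlled by the shape-regularity \eqref{domain regular}. This delivers the local estimate $||\mathrm{tr}\,\bm\tau_h - w||_{0,T} \lesssim ||\bm\tau_h^D||_{0,T}$ uniformly in $T\in\mathcal{T}_h$, which is precisely the energy-compatibility property at the heart of the $\lambda$-robust analysis in \cite{yu guo zhu,34 yu}. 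Summing over the mesh and inserting into the chain above closes the argument.
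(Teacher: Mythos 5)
The paper does not prove this lemma at all: it is quoted verbatim from the cited reference \cite{yu guo zhu}, and your argument is essentially the proof given there --- deviatoric/spherical splitting, the assumed Q1--P0 div-stability to control the piecewise-constant part of $\mathrm{tr}\,\bm{\tau}_h$ through the discrete kernel identity, and an element-level computation with the 5-parameter stress mode to absorb the oscillatory part of the trace into the deviator. The step you flag as the main obstacle does close: on the reference square, with $t=b_1/a_1$ and $s=a_3/b_3$, the linear-in-$\widehat{x}_2$ (resp.\ $\widehat{x}_1$) part of $\widehat{\bm{\tau}}^D$ has squared Frobenius coefficient $\tfrac12(1+t^2)^2\beta_4^2$ (resp.\ $\tfrac12(1+s^2)^2\beta_5^2$), exactly half that of the corresponding part of the trace, so $\|\widehat{\mathrm{tr}}-\overline{\widehat{\mathrm{tr}}}\|_{0,\widehat{T}}^2=2\,\|\widehat{\bm{\tau}}^D-\overline{\widehat{\bm{\tau}}^D}\|_{0,\widehat{T}}^2$ identically, and shape regularity \eqref{domain regular} controls the Jacobian weight when passing to $T$; your proposal is therefore correct.
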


We note that the assumption of this lemma, which was first used in \cite{yu 37} in the analysis of  several  quadrilateral nonconforming elements for incompressible elasticity, requires that the quadrilateral mesh   is stable for the Stokes element Q1-P0.
As we know, the only unstable case for Q1-P0 is the checkerboard mode. Thereupon, any quadrilateral mesh subdivision of $D$ which breaks the checkerboard mode is sufficient for the uniform stability $(\textbf{A}_h)$.

\begin{lem}\label{4.2}
Under the same condition as in Lemma \ref{lemma assumption},  the uniform discrete kernel-coercivity condition $(\textbf{A}_h)$ holds.
\end{lem}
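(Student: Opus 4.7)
The plan is to reduce the stochastic discrete kernel-coercivity to the deterministic one provided by Lemma \ref{lemma assumption}, by exploiting the tensor-product structure $Y_{\mathbf{k}}^{\mathbf{q}} \otimes \Sigma_{Dh}$ and a pointwise-in-$\mathbf{y}$ argument. First I fix an arbitrary $\bm{\tau}_{kh}\in Z_{kh}^0$. Since $\bm{\tau}_{kh}\in Y_{\mathbf{k}}^{\mathbf{q}} \otimes \Sigma_{Dh}$ is a finite sum of simple tensors, I can write $\bm{\tau}_{kh}(\mathbf{x},\mathbf{y}) = \sum_{i=1}^{M}\varphi_i(\mathbf{y})\bm{\tau}_i(\mathbf{x})$ with $\varphi_i\in Y_{\mathbf{k}}^{\mathbf{q}}$ and $\bm{\tau}_i\in \Sigma_{Dh}$.

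The next step is the separation step, which is the crux of the proof. For any fixed $\mathbf{v}_h\in V_{Dh}$, define
$$g(\mathbf{y}) := \int_D \bm{\tau}_{kh}(\mathbf{x},\mathbf{y}):\epsilon(\mathbf{v}_h(\mathbf{x}))\,\mathrm{d}\mathbf{x} = \sum_{i=1}^M \left(\int_D \bm{\tau}_i:\epsilon(\mathbf{v}_h)\,\mathrm{d}\mathbf{x}\right)\varphi_i(\mathbf{y}).$$
Hence $g\in Y_{\mathbf{k}}^{\mathbf{q}}$. Testing the kernel condition with $\mathbf{v}_{kh}=\varphi\,\mathbf{v}_h$ for an arbitrary $\varphi\in Y_{\mathbf{k}}^{\mathbf{q}}$ gives $\int_\Gamma \rho(\mathbf{y})\varphi(\mathbf{y})g(\mathbf{y})\,\mathrm{d}\mathbf{y}=0$, and the choice $\varphi=g$ yields $\int_\Gamma \rho\, g^2\,\mathrm{d}\mathbf{y}=0$. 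Therefore $g(\mathbf{y})=0$ for $\rho\,\mathrm{d}\mathbf{y}$-a.e.\ $\mathbf{y}\in\Gamma$. Since $V_{Dh}$ is finite-dimensional, choosing a finite basis and intersecting the exceptional sets shows that for $\rho\,\mathrm{d}\mathbf{y}$-a.e.\ $\mathbf{y}\in\Gamma$,
$$\int_D \bm{\tau}_{kh}(\cdot,\mathbf{y}):\epsilon(\mathbf{v}_h)\,\mathrm{d}\mathbf{x}=0, \qquad \forall\, \mathbf{v}_h\in V_{Dh}.$$

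Once this pointwise kernel property is established, I invoke Lemma \ref{lemma assumption} for $\bm{\tau}_{kh}(\cdot,\mathbf{y})\in \Sigma_{Dh}$ at a.e.\ $\mathbf{y}$ to obtain
$$\|\bm{\tau}_{kh}(\cdot,\mathbf{y})\|_0^2 \lesssim \int_D \frac{1}{\widetilde{E}_N(\mathbf{x},\mathbf{y})}\,\bm{\tau}_{kh}(\cdot,\mathbf{y}):\mathbf{C}^{-1}\bm{\tau}_{kh}(\cdot,\mathbf{y})\,\mathrm{d}\mathbf{x},$$
with constant depending only on the shape-regularity parameter $\zeta$, on the uniform bounds $e'_{\min},e'_{\max}$ of $\widetilde{E}_N$ given by \eqref{truncated E bounded}, and on $\mathbf{C}$; in particular it is independent of $\lambda$, $h$, $\mathbf{k}$, $\mathbf{q}$, and $\mathbf{y}$. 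Multiplying by $\rho(\mathbf{y})$ and integrating over $\Gamma$ produces exactly $\|\bm{\tau}_{kh}\|_{\widetilde{0}}^2\lesssim a_N(\bm{\tau}_{kh},\bm{\tau}_{kh})$, i.e.\ $(\mathbf{A}_h)$.

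The only nontrivial point is the separation step, in particular verifying that $g\in Y_{\mathbf{k}}^{\mathbf{q}}$ so that the test $\varphi=g$ is admissible; the tensor-product construction of $Y_{\mathbf{k}}^{\mathbf{q}}\otimes V_{Dh}$ and $Y_{\mathbf{k}}^{\mathbf{q}}\otimes \Sigma_{Dh}$ makes this transparent. The rest is a direct application of the deterministic result of Lemma \ref{lemma assumption} together with the uniform two-sided bound \eqref{truncated E bounded} on $\widetilde{E}_N$.
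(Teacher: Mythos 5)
Your proof is correct and follows essentially the same route as the paper: reduce to the deterministic coercivity of Lemma \ref{lemma assumption} pointwise in $\mathbf{y}$, then multiply by $\rho$ and integrate over $\Gamma$. The only difference is that you spell out the separation step (testing with $\varphi\,\mathbf{v}_h$ and choosing $\varphi=g$) that the paper dismisses with ``we easily have,'' which is a worthwhile addition rather than a departure.
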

\begin{proof}
For any $\bm{\tau}_{kh} \in Z_{kh}^0$,  due to the definitions of spaces $Y_{\textbf{k}}^{\textbf{q}} \otimes {\Sigma_D}_h$ and $Y_{\textbf{k}}^{\textbf{q}} \otimes {V_D}_h$ we easily have   $\bm{\tau}_{kh}(\cdot, \textbf{y}') \in \{\bm{\tau}_h \in {\Sigma_D}_h: \int_D \bm{\tau}_h : \epsilon(\textbf{v}_h)\mathrm{d}\textbf{x}=0,~~ \forall \textbf{v}_h \in  {V_D}_h \}$  for any $\textbf{y}'\in \Gamma$. From Lemma \ref{lemma assumption} it follows
\begin{equation}
\int_D\bm{\tau}_{kh}(\cdot, \textbf{y}'):\bm{\tau}_{kh}(\cdot, \textbf{y}')\mathrm{d}\textbf{x} \lesssim \int_D\frac{1}{\widetilde{E}_{N}(\cdot, \textbf{y}')} ~ \bm{\tau}_{kh}(\cdot, \textbf{y}') : {{\textbf{C}}}^{-1}\bm{\tau}_{kh}(\cdot, \textbf{y}')\mathrm{d}\textbf{x}, ~~\forall \textbf{y}'\in \Gamma,
\end{equation}
%
which immediately  implies $(\textbf{A}_h)$.
\end{proof}
To prove  the discrete inf-sup condition $\textbf{B}_h$ we need the following lemma:
\begin{lem}
\label{lemma discrete condition B_h}
For any $\textbf{v}_{kh}\in Y_{\textbf{k}}^{\textbf{q}} \otimes {V_D}_h$, there exists  $\bm{\tau}_{kh} \in Y_{\textbf{k}}^{\textbf{q}} \otimes {\Sigma_D}_h$ such that, for any $T\in \mathcal{T}_h$,
\begin{equation}
\label{lemma}
\int_{\Gamma}\rho(\textbf{y})\int_T \bm{\tau}_{kh} : \epsilon( \textbf{v}_{kh})\mathrm{d}\textbf{x}\mathrm{d}\textbf{y}=||\bm{\tau}_{kh}||^2_{\widetilde{0},T}\gtrsim || \epsilon(\textbf{v}_{kh})||^2_{\widetilde{0},T}.
\end{equation}
\end{lem}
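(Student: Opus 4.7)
My plan is to reduce the stochastic inequality to its well-understood deterministic counterpart by freezing $\textbf{y}\in\Gamma$ and then lifting the construction back to $Y_{\textbf{k}}^{\textbf{q}}\otimes\Sigma_{Dh}$ by linearity. The key observation is that $Y_{\textbf{k}}^{\textbf{q}}\otimes V_{Dh}$ consists of functions whose slice at a fixed $\textbf{y}\in\Gamma$ lies in $V_{Dh}$, and the map $\textbf{y}\mapsto \textbf{v}_{kh}(\cdot,\textbf{y})$ is a tensor polynomial in the $Y_{\textbf{k}}^{\textbf{q}}$ part. Hence, if on each quadrilateral $T$ I can produce a linear operator $\Pi_T:V_{Dh}|_T\to\Sigma_{Dh}|_T$ delivering the desired inequality in the deterministic setting, its pointwise-in-$\textbf{y}$ application will automatically inherit the tensor-polynomial structure in $\textbf{y}$.

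The deterministic per-element building block I plan to invoke is the following (established in the PS analyses, cf.\ \cite{19 yu,yu guo zhu}): for any $\textbf{w}_h\in V_{Dh}$ and any $T\in\mathcal{T}_h$, there exists $\bm{\tau}_h^T\in\Sigma_{Dh}|_T$ (essentially the $L^2(T)$-projection of $\epsilon(\textbf{w}_h)|_T$ onto the $5$-parameter PS stress mode \eqref{tau basis}) such that
\begin{equation*}
\int_T \bm{\tau}_h^T:\epsilon(\textbf{w}_h)\,\mathrm{d}\textbf{x}=\|\bm{\tau}_h^T\|_{0,T}^2\gtrsim \|\epsilon(\textbf{w}_h)\|_{0,T}^2,
\end{equation*}
with constants depending only on the shape-regularity constant $\zeta$ in \eqref{domain regular}. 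The first equality is a trivial projection identity, while the lower bound encodes the fact that the PS stress mode, with the specific ratios $a_3/b_3$ and $b_1/a_1$, has been engineered to capture the bilinear strains up to a controlled constant. Denote the resulting linear map $\textbf{w}_h\mapsto \bm{\tau}_h^T$ by $\Pi_T$.

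Given $\textbf{v}_{kh}\in Y_{\textbf{k}}^{\textbf{q}}\otimes V_{Dh}$, I then define
\begin{equation*}
\bm{\tau}_{kh}(\textbf{x},\textbf{y})|_T:=\Pi_T\bigl(\textbf{v}_{kh}(\cdot,\textbf{y})\bigr)(\textbf{x}),\qquad T\in\mathcal{T}_h,\ \textbf{y}\in\Gamma.
\end{equation*}
Linearity of $\Pi_T$ ensures $\bm{\tau}_{kh}\in Y_{\textbf{k}}^{\textbf{q}}\otimes\Sigma_{Dh}$, since the coefficients of the stochastic polynomial basis of $\textbf{v}_{kh}$ are mapped element-by-element into the corresponding coefficients of $\bm{\tau}_{kh}$. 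Multiplying the two deterministic estimates above by $\rho(\textbf{y})\ge 0$ and integrating in $\textbf{y}$ over $\Gamma$ immediately yields
\begin{equation*}
\int_{\Gamma}\rho(\textbf{y})\int_T\bm{\tau}_{kh}:\epsilon(\textbf{v}_{kh})\,\mathrm{d}\textbf{x}\mathrm{d}\textbf{y}=\|\bm{\tau}_{kh}\|_{\widetilde{0},T}^2\gtrsim \|\epsilon(\textbf{v}_{kh})\|_{\widetilde{0},T}^2,
\end{equation*}
which is \eqref{lemma}.

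The genuine difficulty is concentrated entirely in the deterministic lower bound $\|\Pi_T\textbf{w}_h\|_{0,T}^2\gtrsim \|\epsilon(\textbf{w}_h)\|_{0,T}^2$: it requires the PS stress mode to be ``almost aligned'' with the $5$-dimensional space of strains of $Q_1$-displacements modulo rigid body motions, and it is the place where the shape-regularity hypothesis \eqref{domain regular} enters in a non-trivial way. Once that elementwise bound is cited from the existing PS literature, the stochastic lifting is essentially mechanical, and, crucially, the constant is $\lambda$-independent because $\lambda$ does not even appear in the construction.
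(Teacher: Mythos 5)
Your proposal is correct and follows essentially the same route as the paper, which simply declares the result ``immediate from Lemma 4.4 in \cite{yu guo zhu}'': you reduce to the deterministic elementwise estimate for the PS stress mode and lift it to the tensor-product space by applying the construction pointwise in $\textbf{y}$ and integrating against $\rho$. Your only addition is to make explicit the linearity of the elementwise map $\Pi_T$ (needed so that $\bm{\tau}_{kh}$ actually lands in $Y_{\textbf{k}}^{\textbf{q}}\otimes{\Sigma_D}_h$), a point the paper leaves implicit.
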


\begin{proof}
The desired result is immediate from Lemma 4.4 in \cite{yu guo zhu}.
\end{proof}

\begin{lem}\label{4.4}
The uniform discrete inf-sup condition $(\textbf{B}_h)$ holds.
\end{lem}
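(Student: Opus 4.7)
The plan is to prove the discrete inf-sup condition $(\textbf{B}_h)$ by an explicit construction based on the element-wise result already established in Lemma \ref{lemma discrete condition B_h}. The main idea is to exhibit, for a given $\textbf{v}_{kh}\in Y_{\textbf{k}}^{\textbf{q}} \otimes {V_D}_h$, a test tensor $\bm{\tau}_{kh}\in Y_{\textbf{k}}^{\textbf{q}} \otimes {\Sigma_D}_h$ that realizes the supremum up to a uniform constant, so that no extra work is needed in the stochastic variable beyond summing a local estimate.

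Given $\textbf{v}_{kh}$, I would first invoke Lemma \ref{lemma discrete condition B_h} on each element $T\in\mathcal{T}_h$ to produce a piece $\bm{\tau}_{kh}|_T\in Y_{\textbf{k}}^{\textbf{q}} \otimes {\Sigma_D}_h|_T$ satisfying
\begin{equation*}
\int_{\Gamma}\rho(\textbf{y})\int_T \bm{\tau}_{kh}:\epsilon(\textbf{v}_{kh})\,\mathrm{d}\textbf{x}\mathrm{d}\textbf{y}
=\|\bm{\tau}_{kh}\|_{\widetilde{0},T}^{2}
\gtrsim \|\epsilon(\textbf{v}_{kh})\|_{\widetilde{0},T}^{2}.
\end{equation*}
Since ${\Sigma_D}_h$ has no inter-element continuity requirement on the stress (cf.\ \eqref{P_h}), gluing the element pieces produces a globally admissible $\bm{\tau}_{kh}\in Y_{\textbf{k}}^{\textbf{q}} \otimes {\Sigma_D}_h$.

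Summing the element identity over all $T\in\mathcal{T}_h$ gives
\begin{equation*}
b_N(\bm{\tau}_{kh},\textbf{v}_{kh})
=\sum_{T\in\mathcal{T}_h}\int_{\Gamma}\rho(\textbf{y})\int_T \bm{\tau}_{kh}:\epsilon(\textbf{v}_{kh})\,\mathrm{d}\textbf{x}\mathrm{d}\textbf{y}
=\|\bm{\tau}_{kh}\|_{\widetilde{0}}^{2}
\gtrsim \|\epsilon(\textbf{v}_{kh})\|_{\widetilde{0}}^{2}.
\end{equation*}
Applying Korn's inequality in $V_D$ pointwise in $\textbf{y}\in\Gamma$ and integrating against $\rho(\textbf{y})$ yields $\|\epsilon(\textbf{v}_{kh})\|_{\widetilde{0}}\gtrsim |\textbf{v}_{kh}|_{\widetilde{1}}$, since $\textbf{v}_{kh}(\cdot,\textbf{y})\in V_D$ vanishes on $\partial D_0$ for almost every $\textbf{y}$. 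Combining this with the previous display and dividing by $\|\bm{\tau}_{kh}\|_{\widetilde{0}}$ gives
\begin{equation*}
\frac{b_N(\bm{\tau}_{kh},\textbf{v}_{kh})}{\|\bm{\tau}_{kh}\|_{\widetilde{0}}}
=\|\bm{\tau}_{kh}\|_{\widetilde{0}}
\gtrsim \|\epsilon(\textbf{v}_{kh})\|_{\widetilde{0}}
\gtrsim |\textbf{v}_{kh}|_{\widetilde{1}},
\end{equation*}
which is the desired inequality $(\textbf{B}_h)$.

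There is no genuine obstacle here: Lemma \ref{lemma discrete condition B_h} has already absorbed the delicate part, which is the construction of the PS stress test field that matches the strain of a bilinear displacement on each quadrilateral. The only points to be careful about are, first, that the tensor product structure of $Y_{\textbf{k}}^{\textbf{q}} \otimes {\Sigma_D}_h$ is preserved by the element-wise construction (which is built into the statement of Lemma \ref{lemma discrete condition B_h}), and second, that all hidden constants are independent of $\lambda$, $h$, $k$, and $\textbf{q}$; this follows because the Korn inequality and the constants in Lemma \ref{lemma discrete condition B_h} depend only on $D$ and the shape regularity \eqref{domain regular} of $\mathcal{T}_h$.
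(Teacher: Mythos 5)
Your proof is correct and follows essentially the same route as the paper: both take the test tensor supplied element-wise by Lemma \ref{lemma discrete condition B_h}, use the identity $b_N(\bm{\tau}_{kh},\textbf{v}_{kh})=\|\bm{\tau}_{kh}\|_{\widetilde{0}}^2\gtrsim\|\epsilon(\textbf{v}_{kh})\|_{\widetilde{0}}^2$ summed over the mesh, and conclude via Korn's inequality (the equivalence of $\|\epsilon(\cdot)\|_{\widetilde{0}}$ and $|\cdot|_{\widetilde{1}}$ on $V_D$) after dividing by $\|\bm{\tau}_{kh}\|_{\widetilde{0}}$. The only cosmetic difference is that the paper phrases the chain of inequalities starting from the product $\|\bm{\tau}_{kh}\|_{\widetilde{0}}|\textbf{v}_{kh}|_{\widetilde{1}}$ rather than from the quotient, which is the same argument.
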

\begin{proof}
From Lemma \ref{lemma discrete condition B_h}, for any $\textbf{v}_{kh}\in Y_{\textbf{k}}^{\textbf{q}} \otimes {V_D}_h$, there exists $\bm{\tau}_{kh} \in Y_{\textbf{k}}^{\textbf{q}} \otimes {\Sigma_D}_h$ such that
$$||\bm{\tau}_{kh}||_{\widetilde{0}} |\textbf{v}_{kh}|_{\widetilde{1}} \lesssim (\sum_T\int_{\Gamma}\rho(\textbf{y})\int_T \bm{\tau}_{kh} : \bm{\tau}_{kh} \mathrm{d}\textbf{x} \mathrm{d}\textbf{y} )^{\frac{1}{2}} (\sum_T\int_{\Gamma}\rho(\textbf{y})\int_T \epsilon(\textbf{v}_{kh}) : \epsilon(\textbf{v}_{kh}) \mathrm{d}\textbf{x}\mathrm{d}\textbf{y})^{\frac{1}{2}} $$
$$\lesssim \sum_T\int_{\Gamma}\rho(\textbf{y})\int_T \bm{\tau}_{kh} : \bm{\tau}_{kh} \mathrm{d}\textbf{x} \mathrm{d}\textbf{y} \lesssim \int_{\Gamma}\rho(\textbf{y})\int_D \bm{\tau}_{kh} : \epsilon( \textbf{v}_{kh})\mathrm{d}\textbf{x}\mathrm{d}\textbf{y},$$
where in the first inequality the equivalence of the seminorm $|\epsilon(\cdot)|_{\widetilde{0}}$ and the norm $||\cdot||_{\widetilde{1}}$ on the space $L^2_P(\Omega ;  ~{V_D})$ is used. Then the uniform discrete inf-sup condition $(\textbf{B}_h)$ follows from
$$|\textbf{v}_{kh}|_{\widetilde{1}}\lesssim \frac{\int_{\Gamma}\rho(\textbf{y})\int_T \bm{\tau}_{kh} : \epsilon(\textbf{v}_{kh})\mathrm{d}\textbf{x}\mathrm{d}\textbf{y}} {||\bm{\tau}_{kh}||_{\widetilde{0}}}\leqslant
\sup_{\bm{\tau}_{kh}'\in Y_{\textbf{k}}^{\textbf{q}} \otimes {\Sigma_D}_h}\frac{\int_{\Gamma}\rho(\textbf{y})\int_T \bm{\tau}_{kh}': \epsilon(\textbf{v}_{kh})\mathrm{d}\textbf{x}\mathrm{d}\textbf{y}}{||\bm{\tau}_{kh}'||_{\widetilde{0}}}.$$
\end{proof}

In light of Lemma \ref{4.2} and Lemma \ref{4.4}, we immediately obtain the following  existence and uniqueness of the $k\times h$-SHSFEM approximation $(\bm{\sigma}_{kh} , \textbf{u}_{kh})$:
\begin{thm}\label{th4.1}
Under the same condition as in Lemma \ref{lemma assumption} , the  discretization problem (\ref{discrete formulation}) admits a unique solution $(\bm{\sigma}_{kh} , \textbf{u}_{kh}) \in (Y_{\textbf{k}}^{\textbf{q}} \otimes {\Sigma_D}_h)\times (Y_{\textbf{k}}^{\textbf{q}} \otimes {V_D}_h)$.
\end{thm}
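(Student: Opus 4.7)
The plan is to recognize that Theorem 4.1 is a direct corollary of the two discrete stability conditions $(\textbf{A}_h)$ and $(\textbf{B}_h)$ established in Lemmas \ref{4.2} and \ref{4.4}, combined with the standard well-posedness theory for saddle-point problems (Brezzi's theorem) as recorded in \cite{book hybrid}. Since the discrete problem \eqref{discrete formulation} is posed in finite-dimensional spaces, existence of a solution is equivalent to uniqueness; thus it suffices to verify the hypotheses of the mixed-method framework.

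First I would collect the continuity properties: the bilinear forms $a_N(\cdot,\cdot)$ on $(Y_{\textbf{k}}^{\textbf{q}}\otimes \Sigma_{Dh})\times(Y_{\textbf{k}}^{\textbf{q}}\otimes \Sigma_{Dh})$ and $b_N(\cdot,\cdot)$ on $(Y_{\textbf{k}}^{\textbf{q}}\otimes \Sigma_{Dh})\times(Y_{\textbf{k}}^{\textbf{q}}\otimes V_{Dh})$, together with the linear form $\ell_N(\cdot)$, are continuous on the natural tensor-product norms; this is immediate from their definitions \eqref{a_N definition}--\eqref{l_N  definition}, the upper bound \eqref{truncated E bounded} on $\widetilde{E}_N$, Cauchy--Schwarz, and the assumed integrability of $\textbf{f}_N$, $\textbf{g}_N$. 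Second, the uniform discrete kernel-coercivity $(\textbf{A}_h)$ on $Z_{kh}^0$ was shown in Lemma \ref{4.2} under the stated mesh hypothesis (Lemma \ref{lemma assumption}). Third, the uniform discrete inf-sup condition $(\textbf{B}_h)$ was shown in Lemma \ref{4.4} without any additional hypothesis.

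With these three ingredients in hand, I would invoke the finite-dimensional version of Brezzi's theorem: the discrete saddle-point problem \eqref{discrete formulation} is uniquely solvable, and the solution $(\bm{\sigma}_{kh},\textbf{u}_{kh})\in (Y_{\textbf{k}}^{\textbf{q}}\otimes \Sigma_{Dh})\times (Y_{\textbf{k}}^{\textbf{q}}\otimes V_{Dh})$ satisfies the stability estimate
\begin{equation*}
\|\bm{\sigma}_{kh}\|_{\widetilde{0}}+|\textbf{u}_{kh}|_{\widetilde{1}}\lesssim \|\textbf{f}_N\|_{\widetilde{0}}+\|\textbf{g}_N\|_{\widetilde{0},\partial D_1},
\end{equation*}
with constants independent of $\lambda$, $h$, $\textbf{k}$, and $\textbf{q}$ (inherited from the uniformity built into $(\textbf{A}_h)$ and $(\textbf{B}_h)$).

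There is no genuine obstacle; the entire content of the theorem is packaged in Lemmas \ref{4.2} and \ref{4.4}. The only minor point worth mentioning in the write-up is that the tensor-product structure of $Y_{\textbf{k}}^{\textbf{q}}\otimes \Sigma_{Dh}$ and $Y_{\textbf{k}}^{\textbf{q}}\otimes V_{Dh}$ is consistent with the pointwise-in-$\textbf{y}$ arguments used to derive $(\textbf{A}_h)$ and $(\textbf{B}_h)$, so that the deterministic stability of the PS pair $(\Sigma_{Dh},V_{Dh})$ lifts cleanly to the stochastic discrete setting.
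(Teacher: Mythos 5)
Your proposal is correct and follows exactly the paper's route: the paper likewise notes the continuity of $a_N$, $b_N$, $\ell_N$ and then deduces Theorem \ref{th4.1} immediately from the discrete kernel-coercivity $(\textbf{A}_h)$ of Lemma \ref{4.2} and the discrete inf-sup condition $(\textbf{B}_h)$ of Lemma \ref{4.4} via the standard mixed finite element theory of \cite{book hybrid}. The additional stability bound you record is a harmless (and accurate) byproduct of the same argument.
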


\subsubsection{Uniform error estimation}\label{sec4.2.3}

In what follows  we shall derive a priori  estimates of the errors $ ||\bm{\sigma}-\bm{\sigma}_{kh}||_{\widetilde{0}}$ and $|\textbf{u}-\textbf{u}_{kh}|_{\widetilde{1}}$ which are uniform with respect to the Lam$\acute{e}$ constant $\lambda\in (0, +\infty)$, where $(\bm{\sigma}, \textbf{u})\in (L^2_P(\Omega;~ \Sigma_D))\times( L^2_P(\Omega ;  ~{V_D}))$ is the solution of the weak problem (\ref{continous formulation}).

Let $(\bm{\sigma}_N, \textbf{u}_N)\in (L^2_{\rho}(\Gamma)\otimes {\Sigma_D})\times( L^2_{\rho}(\Gamma)\otimes {V_D})$ be the solution of  truncated weak problem (\ref{truncated continous formulation N}).   By  triangle inequality it holds
 \begin{equation}
 \label{412}
 ||\bm{\sigma}-\bm{\sigma}_{kh}||_{\widetilde{0}} \leq ||\bm{\sigma}-\bm{\sigma}_N||_{\widetilde{0}}
 +||\bm{\sigma}_N-\bm{\sigma}_{kh}||_{\widetilde{0}},
 \end{equation}
 \begin{equation}
 |\textbf{u}-\textbf{u}_{kh}|_{\widetilde{1}} \leq |\textbf{u}-\textbf{u}_N|_{\widetilde{1}}
 +|\textbf{u}_N-\textbf{u}_{kh}|_{\widetilde{1}},
 \end{equation}
where the perturbation errors,  $||\bm{\sigma}-\bm{\sigma}_N||_{\widetilde{0}}$ and $ |\textbf{u}-\textbf{u}_N|_{\widetilde{1}}$, are estimated by   Lemma \ref{propo truncated N}.  For the finite element approximation error terms $||\bm{\sigma}_N-\bm{\sigma}_{kh}||_{\widetilde{0}}$ and $|\textbf{u}_N-\textbf{u}_{kh}|_{\widetilde{1}}$,   from the stability $(\textbf{A}_h)$, $(\textbf{B}_h)$ and  the  standard theory of mixed finite element methods \cite{book hybrid} it follows
\begin{equation}\label{415}
||\bm{\sigma}_N-\bm{\sigma}_{kh}||_{\widetilde{0}}+|\textbf{u}_N-\textbf{u}_{kh}|_{\widetilde{1}}
\lesssim  \inf_{\bm{\tau}_{kh}\in Y_{\textbf{k}}^{\textbf{q}} \otimes {\Sigma_D}_h} ||\bm{\sigma}_N-\bm{\tau}_{kh}||_{\widetilde{0}}+\inf_{\textbf{v}_{kh}\in Y_{\textbf{k}}^{\textbf{q}} \otimes {V_D}_h}|\textbf{u}_N-\textbf{v}_{kh}|_{\widetilde{1}}.
\end{equation}

To further estimate the righthand-side terms of the above inequality, we need some regularity of the solution $(\bm{\sigma}_N, \textbf{u}_N)$.
In fact, it is well-known that the following regularity holds:
 \begin{equation}\label{regu1}
 ||\bm{\sigma}_N(\cdot, \textbf{y})||_{1}+||\textbf{u}_N(\cdot, \textbf{y})||_{2}\lesssim
 ||\textbf{f}_N(\cdot, \textbf{y})||_{0}+||\textbf{g}_N(\cdot, \textbf{y})||_{0,\partial D_1},\quad \forall \textbf{y} \in \Gamma.
 \end{equation}
On the other hand, in view of  \eqref{truncated E bounded} and the truncated K-L expansions \eqref{truncated K-L}-\eqref{truncated K-L of g}, and by taking
derivatives with respect to $y_n$ in (\ref{deterministic problem}),  standard
  inductive arguments yield
\begin{equation}
\label{extension derivative q_n}
\frac{||\partial_{y_n}^{q_n+1}\bm{\sigma}_N(\cdot, \textbf{y})||_{0}}{(q_n+1)!}+\frac{|\partial_{y_n}^{q_n+1}\textbf{u}_N(\cdot, \textbf{y})|_{1}}{(q_n+1)!}\lesssim (2\gamma_n)^{q_n+1}(||\textbf{f}_N(\cdot,\textbf{y})||_{0}+||\textbf{g}_N(\cdot,\textbf{y})||_{0,\partial D_1}+1),\quad \forall \textbf{y} \in \Gamma,
\end{equation}
where \begin{equation}\label{gamma_n}
\gamma_n:=\max \{
\frac{1}{{e}_{min}'}\sqrt{\widetilde{\lambda}_n}||\widetilde{b}_n||_{L^{\infty}(D)}, \sqrt{\widehat{\lambda}_{in}}||\widehat{b}_{in}||_{0}(i=1,2), \sqrt{\overline{\lambda}_{in}}||\overline{b}_{in}||_{0,\partial D_1}(i=1,2) \}.
\end{equation}
 Then, thanks to $Y_{\textbf{k}}^{\textbf{q}}=\otimes_{n=1}^N Y_{k_n}^{q_n}$ and the regularity \eqref{regu1}-\eqref{extension derivative q_n},  standard   interpolation estimation  yields
%
\begin{eqnarray}
\label{sigma-tau_h}
 \inf_{\bm{\tau}_{kh}\in Y_{\textbf{k}}^{\textbf{q}} \otimes {\Sigma_D}_h} ||\bm{\sigma}_N-\bm{\tau}_{kh}||_{\widetilde{0}}&\lesssim&  h||\bm{\sigma}_N||_{\widetilde{1}}+\sum_{n=1}^{N} (\frac{k_n}{2})^{q_n+1}\frac{||\partial_{y_n}^{q_{n+1}}\bm{\sigma}_N||_{L^2(\Gamma)\otimes {\Sigma_D}}}{(q_n+1)!}\nonumber\\
 &\lesssim&  h+\sum_{n=1}^{N} ( {k_n\gamma_n} )^{q_n+1},\\
 \label{u-u_h}
\inf_{\textbf{v}_{kh}\in Y_{\textbf{k}}^{\textbf{q}} \otimes {V_D}_h}|\textbf{u}_N-\textbf{v}_{kh}|_{\widetilde{1}}&\lesssim &h||\textbf{u}_N||_{\widetilde{2}}+\sum_{n=1}^{N} (\frac{k_n}{2})^{q_n+1}
\frac{||\partial_{y_n}^{q_{n+1}}\textbf{u}_N||_{L^2(\Gamma)\otimes {V_D}}}{(q_n+1)!}\nonumber\\
&\lesssim &h+\sum_{n=1}^{N} ( {k_n\gamma_n} )^{q_n+1}.
 \end{eqnarray}

 In light of the estimates \eqref{415} and \eqref{sigma-tau_h}-\eqref{u-u_h},  we immediately obtain the following conclusion.

\begin{thm}\label{theorem42}
Let   $(\bm{\sigma}_N, \textbf{u}_N)\in (L^2_{\rho}(\Gamma)\otimes {\Sigma_D})\times( L^2_{\rho}(\Gamma)\otimes {V_D})$ and $(\bm{\sigma}_{kh}, \textbf{u}_{kh})\in (Y_{\textbf{k}}^{\textbf{q}} \otimes {\Sigma_D}_h)\times( Y_{\textbf{k}}^{\textbf{q}} \otimes {V_D}_h)$ be the solutions of  (\ref{truncated continous formulation N}) and (\ref{discrete formulation}), respectively. Then, under the same condition as in Lemma \ref{lemma assumption}  and for sufficiently large $N$, it holds
\begin{equation}\label{error1}
||\bm{\sigma}_N-\bm{\sigma}_{kh}||_{\widetilde{0}}+|\textbf{u}_N-\textbf{u}_{kh}|_{\widetilde{1}}
\lesssim h+\sum_{n=1}^{N} ( {k_n\gamma_n} )^{q_n+1}.
\end{equation}
\end{thm}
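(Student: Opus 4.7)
The plan is to reduce the estimate to the sum of a quasi-optimal best-approximation bound and tensor-product interpolation errors. Note that the statement is a pure finite-element approximation result for the truncated deterministic mixed problem \eqref{truncated continous formulation N}; the Karhunen--Lo\`eve truncation error has already been accounted for in Lemma \ref{propo truncated N} and plays no role here.

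First, I would invoke the abstract Babu\v{s}ka--Brezzi theory for the mixed problem on the pair of tensor-product subspaces $Y_{\textbf{k}}^{\textbf{q}}\otimes {\Sigma_D}_h$ and $Y_{\textbf{k}}^{\textbf{q}}\otimes {V_D}_h$. The continuity of $a_N(\cdot,\cdot)$ and $b_N(\cdot,\cdot)$ is $\lambda$-uniform thanks to the two-sided bound \eqref{truncated E bounded}, while the $\lambda$-uniform discrete kernel-coercivity $(\textbf{A}_h)$ from Lemma \ref{4.2} and the discrete inf-sup condition $(\textbf{B}_h)$ from Lemma \ref{4.4} supply the remaining hypotheses. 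The standard Brezzi framework then yields the C\'ea-type quasi-optimality \eqref{415}, reducing the task to controlling the two best-approximation quantities on its right-hand side.

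Second, I would estimate those best-approximation quantities by tensor-product interpolation. Fixing $\textbf{y}\in\Gamma$, the PS-element interpolant in the spatial variable produces an $O(h)$ bound, controlled by $\|\bm{\sigma}_N(\cdot,\textbf{y})\|_{1}+\|\textbf{u}_N(\cdot,\textbf{y})\|_{2}$, which in turn is $\lambda$-uniformly controlled through the spatial regularity \eqref{regu1}. In each stochastic coordinate $y_n$, a one-dimensional piecewise polynomial interpolant of degree $q_n$ on intervals of length $k_n$ contributes $(k_n/2)^{q_n+1}\|\partial_{y_n}^{q_n+1}(\cdot)\|/(q_n+1)!$, which by the analytic-type regularity \eqref{extension derivative q_n} is bounded by $(k_n\gamma_n)^{q_n+1}$ with $\gamma_n$ as in \eqref{gamma_n}. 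Summing over $n$ and adding the spatial contribution yields \eqref{sigma-tau_h}--\eqref{u-u_h}; inserting into \eqref{415} produces the asserted bound \eqref{error1}.

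The main obstacle is establishing the analytic regularity \eqref{extension derivative q_n} with the explicit K--L-driven constants $\gamma_n$ of \eqref{gamma_n}. Differentiating the truncated strong form \eqref{deterministic problem} in $y_n$ produces factors $\partial_{y_n}\widetilde{E}_N=\sqrt{\widetilde{\lambda}_n}\widetilde{b}_n$ and similarly for $\textbf{f}_N,\textbf{g}_N$; an induction on the differentiation order via Leibniz's rule must track these coefficients carefully to extract the factorial denominator and the geometric $(2\gamma_n)^{q_n+1}$ growth. The uniform lower bound $e_{min}'$ from \eqref{truncated E bounded} is essential for dividing by $\widetilde{E}_N$ without introducing $\lambda$-dependence, which explains the presence of $1/e_{min}'$ in the first entry of \eqref{gamma_n}.
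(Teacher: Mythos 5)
Your proposal follows essentially the same route as the paper: the discrete stability conditions $(\textbf{A}_h)$ and $(\textbf{B}_h)$ give the quasi-optimality bound \eqref{415} via standard mixed finite element theory, and the two best-approximation terms are then estimated by combining the $O(h)$ spatial interpolation error (controlled through \eqref{regu1}) with the one-dimensional stochastic interpolation errors $(k_n/2)^{q_n+1}\|\partial_{y_n}^{q_n+1}(\cdot)\|/(q_n+1)!$, bounded by $(k_n\gamma_n)^{q_n+1}$ using \eqref{extension derivative q_n}. Your identification of the inductive derivation of \eqref{extension derivative q_n} as the technical crux is consistent with the paper, which likewise invokes ``standard inductive arguments'' at that point.
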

\begin{rem}\label{Rem41}
We notice that the estimate \eqref{error1} is optimal with respect to the mesh parameters $h$ and $\textbf{k}=(k_1,k_2,\cdots,k_N)$, but not optimal with respect to the polynomial  degree $\textbf{q}=(q_1,q_2,\cdots,q_N)$ since it requires  $k_n\gamma_n<1$.
\end{rem}

The above theorem, together with Lemma \ref{propo truncated N}, implies the following a priori error estimates for the $k\times h$-SHSFEM approximation $(\bm{\sigma}_{kh}, \textbf{u}_{kh})$.
\begin{thm}
Let $(\bm{\sigma}, \textbf{u})\in (L^2_P(\Omega;~ \Sigma_D))\times( L^2_P(\Omega ;  ~{V_D}))$ and $(\bm{\sigma}_{kh}, \textbf{u}_{kh})\in (Y_{\textbf{k}}^{\textbf{q}} \otimes {\Sigma_D}_h)\times( Y_{\textbf{k}}^{\textbf{q}} \otimes {V_D}_h)$ be the solutions of (\ref{continous formulation}) and (\ref{discrete formulation}), respectively.  Then, under the same conditions as in Theorem \ref{theorem42},  it holds
\begin{equation}
||\bm{\sigma}-\bm{\sigma}_{kh}||_{\widetilde{0}}+|\textbf{u}-\textbf{u}_{kh}|_{\widetilde{1}}
\lesssim   N^{1/2}e^{- r N^{1/2}} + h +
\sum_{n=1}^{N}(k_n\gamma_n)^{q_n+1}
\end{equation}
  for any $r>0$ if the  covariance functions of $\widetilde{E}$, $\textbf{f}$ and $\textbf{g}$
are piecewise analytic,  and holds
\begin{equation}
||\bm{\sigma}-\bm{\sigma}_{kh}||_{\widetilde{0}}+|\textbf{u}-\textbf{u}_{kh}|_{\widetilde{1}}
\lesssim   N^{-s}+ h +
\sum_{n=1}^{N}(k_n\gamma_n)^{q_n+1}
\end{equation}
 for any $s>0$ if the  covariance functions of $\widetilde{E}$, $\textbf{f}$ and $\textbf{g}$
are piecewise smooth.
\end{thm}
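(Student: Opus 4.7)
The plan is to obtain the final estimates by a straightforward triangle-inequality splitting of the total error into a truncation error and a discretization error, and then to invoke the two preparatory results that have already been established in the paper. Concretely, I introduce the intermediate solution $(\bm{\sigma}_N,\textbf{u}_N)\in (L^2_{\rho}(\Gamma)\otimes {\Sigma_D})\times (L^2_{\rho}(\Gamma)\otimes {V_D})$ of the truncated mixed problem \eqref{truncated continous formulation N} and write
\begin{equation*}
\|\bm{\sigma}-\bm{\sigma}_{kh}\|_{\widetilde{0}}+|\textbf{u}-\textbf{u}_{kh}|_{\widetilde{1}}
\le \bigl(\|\bm{\sigma}-\bm{\sigma}_{N}\|_{\widetilde{0}}+|\textbf{u}-\textbf{u}_{N}|_{\widetilde{1}}\bigr)
+\bigl(\|\bm{\sigma}_N-\bm{\sigma}_{kh}\|_{\widetilde{0}}+|\textbf{u}_N-\textbf{u}_{kh}|_{\widetilde{1}}\bigr).
\end{equation*}

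First, I would bound the discretization error $\|\bm{\sigma}_N-\bm{\sigma}_{kh}\|_{\widetilde{0}}+|\textbf{u}_N-\textbf{u}_{kh}|_{\widetilde{1}}$ directly by Theorem \ref{theorem42}, which yields the contribution $h+\sum_{n=1}^{N}(k_n\gamma_n)^{q_n+1}$. Since the hypotheses on the mesh (Lemma \ref{lemma assumption}) and the sufficient largeness of $N$ are inherited from the assumptions in the statement, no additional verification is needed here.

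Second, I would bound the truncation part $\|\bm{\sigma}-\bm{\sigma}_N\|_{\widetilde{0}}+|\textbf{u}-\textbf{u}_N|_{\widetilde{1}}$ by Lemma \ref{propo truncated N}. In the piecewise analytic case I use \eqref{6}, obtaining the term $C_r N^{1/2}e^{-rN^{1/2}}$, and absorb $C_r$ into the $\lesssim$-constant; in the piecewise smooth case I use \eqref{7}, which gives $C_s N^{-s}$ for arbitrary $s>0$. Here I rely on the observation, already used in the proof of Lemma \ref{propo truncated N}, that the stability bound \eqref{stability} controls $\|\bm{\sigma}\|_{\widetilde{0}}$ by the loading data, so that the factor $\|{\widetilde{E}}-{\widetilde{E}_{N}}\|_{\widetilde{\infty}}\cdot\|\bm{\sigma}\|_{\widetilde{0}}$ appearing in \eqref{truncated N error} does indeed produce the stated rates via Lemma \ref{lemma infty norm KL convergence}.

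Combining the two bounds and adding them term by term gives both conclusions of the theorem. There is really no obstacle: the heavy lifting, namely the uniform-in-$\lambda$ stability $(\textbf{A}_h)$–$(\textbf{B}_h)$ (Lemmas \ref{4.2} and \ref{4.4}), the interpolation estimates \eqref{sigma-tau_h}–\eqref{u-u_h} with the explicit $\gamma_n$ in \eqref{gamma_n}, and the decay of the K--L truncation in $\|\cdot\|_{\widetilde{\infty}}$ and $\|\cdot\|_{\widetilde{0}}$, has already been performed upstream; the present statement is simply their composition through the triangle inequality. The only mild point to be careful about is to verify that the hypothesis "sufficiently large $N$" of Theorem \ref{theorem42} is compatible with the regime in which Lemma \ref{propo truncated N} applies, which is immediate since both merely demand $N>N_0$ so that $\widetilde{E}_N$ is bounded away from zero a.e.\ in $D\times\Omega$.
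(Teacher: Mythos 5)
Your proposal is correct and follows exactly the paper's own route: the paper obtains this theorem by the same triangle-inequality splitting through the intermediate truncated solution $(\bm{\sigma}_N,\textbf{u}_N)$, bounding the truncation part by Lemma \ref{propo truncated N} and the discretization part by Theorem \ref{theorem42}. Nothing further is needed.
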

\begin{rem}
 Here we recall that $"\lesssim"$ denotes $"\leq C " $ with C a positive constant independent of  $\lambda$ , $h$ , $N$,  $\textbf{k}$.
\end{rem}

 \subsection{Stochastic hybrid stress finite element approximation:  $p\times h$ version}

As shown in Section \ref{k-h} and Remark \ref{Rem41}, the $k\times h$-SHSFEM is based on the $k$ partition of the stochastic field  $\Gamma$ and requires the mesh parameter   $k_n$ ($n=1,2,\cdots,N$) to be sufficiently small so as to acquire optimal error estimates.

In this subsection, we shall introduce a $p\times h$-version   stochastic hybrid stress  finite element method ($p\times h$-SHSFEM), which     does not  require to refine $\Gamma$. We will show  this method  is of   exponential rates of convergence with
respect to   the degrees of the polynomials used for approximation. To this end,
we first assume
\begin{equation}\label{assump}
\widetilde{E}_N\in C^0(\Gamma, L^{\infty}(D)),\quad \textbf{f}_N\in C^0(\Gamma, L^2(D)),\quad \textbf{g}_N\in C^0(\Gamma, L^2(\partial D_1)).
\end{equation}
Here
\begin{equation}
C^0(\Gamma, B):=\{ v:\Gamma \rightarrow B,  v~  \text{is continuous in}~
 \textbf{y} ~\text{and} ~ \max_{\textbf{y}\in \Gamma}||v(\textbf{y}) ||_B <+\infty\}
\end{equation}
for   any  Banach space, $B$, of functions defined in $D$.
The  above assumptions indicate that the solution, $(\bm{\sigma}_N,\textbf{u}_N )$,  of  the  problem \eqref{truncated continous formulation N},  satisfies
$$\bm{\sigma}_N \in  C^0(\Gamma, {\Sigma_D}),\quad  \textbf{u}_N \in C^0(\Gamma, {V_D}).$$


Let $\textbf{p}:=(p_1,p_2,...,p_N)$ be a  nonnegative integer muti-index. We define the   $p-$version tensor product finite element  space $Z^{\textbf{p}}$  as
\begin{equation}
Z^{\textbf{p}}:=\otimes_{n=1}^N Z_n^{p_n},\quad  Z_n^{p_n}:=  \left\{\varphi: \Gamma_n\rightarrow R: \varphi \in span\{y_n^\alpha: \alpha=0,1,...,p_n\} \right\}.
\end{equation}
 Then the  $p\times h$-SHSFEM scheme   reads as:
find $(\bm{\sigma}_{ph}, \textbf{u}_{ph})\in  (Z^{\textbf{p}} \otimes {\Sigma_D}_h) \times  (Z^{\textbf{p}} \otimes {V_D}_h)  $  such that
\begin{equation}
\label{discrete formulation  ph-version}
\left\{
    \begin{array}{ll}
        a_N(\bm{\sigma}_{ph},\bm{\tau}_{ph})-b_N(\bm{\tau}_{ph}, \textbf{u}_{ph})=0,
        ~~~~~~        \forall \bm{\tau}_{ph}\in  Z^{\textbf{p}} \otimes {\Sigma_D}_h, \\
        b_N(\bm{\sigma}_{ph}, \textbf{v}_{ph})=\ell_N(\textbf{v}_{ph}),    ~~~~~~~~~~~~~~~~  \forall \textbf{v}_{ph}\in Z^{\textbf{p}} \otimes {V_D}_h.
      \end{array}
       \right.
\end{equation}



We note that $Z^{\textbf{p}}$ is a special case of the $k-$version tensor product finite element  space $Y_{\textbf{k}}^{\textbf{q}}$, then, in this sense, the $p\times h$-SHSFEM can be viewed as a special case of the $k\times h$-SHSFEM.  As a result,   the corresponding stability conditions and  the existence and uniqueness of the solution  of the  $p\times h$-SHSFEM scheme  (\ref{discrete formulation  ph-version}) follow from those of the $k\times h$-SHSFEM (cf. Lemma \ref{4.2}, Lemma \ref{4.4} and Theorem \ref{th4.1}).

Following the same routine  as in Section \ref{sec4.2.3} (cf. the estimates \eqref{412}-\eqref{415}),    we only need to  estimate the terms $\inf\limits_{\bm{\tau}_{ph}  \in Z^{\textbf{p}} \otimes {\Sigma_D }_h} ||\bm{\sigma}_N-{\bm{\tau}}_{ph}||_{\widetilde{0}}$  and
$\inf\limits_{\textbf{v}_{ph}\in Z^\textbf{p}\otimes {V_D}_h}|\textbf{u}_N-\textbf{v}_{ph}|_{\widetilde{1}}.$
Since
\begin{eqnarray}
\label{9}
\inf\limits_{\bm{\tau}_{ph}  \in Z^{\textbf{p}} \otimes {\Sigma_D }_h} ||\bm{\sigma}_N-{\bm{\tau}}_{ph}||_{\widetilde{0}}\nonumber
&\lesssim&
 \inf\limits_{\bm{\tau}_p  \in Z^{\textbf{p}} \otimes {\Sigma_D }} ||\bm{\sigma}_N-{\bm{\tau}}_p||_{\widetilde{0}}+\inf\limits_{\bm{\tau}_h  \in L^2_\rho(\Gamma) \otimes {\Sigma_D }_h} ||\bm{\sigma}_N-{\bm{\tau}_h}||_{\widetilde{0}}\\
 &\lesssim&   \inf\limits_{\bm{\tau}_{p}  \in Z^{\textbf{p}} \otimes {\Sigma_D }} ||\bm{\sigma}_N-{\bm{\tau}}_p||_{\widetilde{0}}+h||\bm{\sigma}_N||_{\widetilde{1}},
\end{eqnarray}
\begin{eqnarray}
\label{10}
\inf\limits_{\textbf{v}_{ph}\in Z^\textbf{p}\otimes {V_D}_h}|\textbf{u}_N-\textbf{v}_{ph}|_{\widetilde{1}}\nonumber
&\lesssim  &\inf\limits_{\textbf{v}_p\in Z^\textbf{p}\otimes {V_D}}|\textbf{u}_N-\textbf{v}_p|_{\widetilde{1}}+\inf\limits_{\textbf{v}_h\in L^2_\rho(\Gamma)\otimes {V_D}_h}|\textbf{u}_N-\textbf{v}_h|_{\widetilde{1}}\\
 &\lesssim& \inf\limits_{\textbf{v}_p\in Z^\textbf{p}\otimes {V_D}}|\textbf{u}_N-\textbf{v}_p|_{\widetilde{1}}+h||\textbf{u}_N||_{\widetilde{2}},
\end{eqnarray}
it remains to estimate $\inf\limits_{\bm{\tau}_p  \in Z^{\textbf{p}} \otimes {\Sigma_D }} ||\bm{\sigma}_N-{\bm{\tau}}_p||_{\widetilde{0}}$  and
$\inf\limits_{\textbf{v}_p\in Z^\textbf{p}\otimes {V_D}}|\textbf{u}_N-\textbf{v}_p|_{\widetilde{1}}.$
Recalling $ Z^{\textbf{p}}=\otimes_{n=1}^N Z_n^{p_n}$, we easily have the following estimates:
\begin{equation}
\label{5}
 \inf\limits_{\bm{\tau}_p  \in Z^{\textbf{p}} \otimes {\Sigma_D }} ||\bm{\sigma}_N-{\bm{\tau}}_p||_{\widetilde{0}}
\lesssim\sum_{n=1}^N \inf_{\bm{\tau}_{p_n}\in Z_n^{p_n}\otimes \Sigma_D} ||\bm{\sigma}_N-\bm{\tau}_{p_n}||_{C^0(\Gamma, \Sigma_D)},
\end{equation}
\begin{equation}
\label{8}
 \inf\limits_{\textbf{v}_p\in Z^\textbf{p}\otimes {V_D}}|\textbf{u}_N-\textbf{v}_p|_{\widetilde{1}}
\lesssim\sum_{n=1}^N \inf_{\textbf{v}_{p_n}\in Z_n^{p_n}\otimes V_D} ||\textbf{u}_N-\textbf{v}_{p_n}||_{C^0(\Gamma, V_D)}.
\end{equation}
Then the thing left is  to estimate the right hand side terms of the above two inequalities.

Denote $ \Gamma_n^*:=\prod_{i=1,i\neq n}^N \Gamma_i,$  then
$\Gamma= \Gamma_n\times \Gamma_n^*,$ and for any $\textbf{y}\in \Gamma$ we denote $\textbf{y}=(y_n, \textbf{y}_n^*)$ with $y_n\in \Gamma_n$ and $\textbf{y}_n^*\in \Gamma_n^*$.
We have  the following lemma.
\begin{lem}
\label{lemma ph}
Let  $(\bm{\sigma}_N, \textbf{u}_N)\in (L^2_{\rho}(\Gamma)\otimes {\Sigma_D})\times( L^2_{\rho}(\Gamma)\otimes {V_D})$  be the solution of the problem (\ref{truncated continous formulation N}).
Then  for any $\textbf{x}\in D$, $\textbf{y}=(y_n, \textbf{y}_n^*)\in \Gamma$,
 the solutions $\bm{\sigma}_N (x, y_n, y_n^*)$ and $\textbf{u}_N(x,y_n,y_n^*)$ as functions of $y_n$, i.e.  $\bm{\sigma}_N : \Gamma_n \rightarrow C^0( \Gamma_n^*; {\Sigma_D})$,~ $\textbf{u}_N :\Gamma_n \rightarrow C^0( \Gamma_n^*; {V_D})$, can be analytically extended to the complex plane
\begin{equation}\nonumber
\Xi(\Gamma_n; d_n):=\{z \in \mathbb{C}, dist(z, \Gamma_n)\leq d_n  \},
\end{equation}
with $0<d_n<\frac{1}{2\gamma_n}$ and
$\gamma_n$ given by \eqref{gamma_n}.
In addition, for all $z \in \Xi(\Gamma_n; d_n)$, it holds
\begin{equation}
\label{p 3}
||\bm{\sigma}_N(z)||_{C^0(\Gamma_n^*;\overline{ \Sigma})}+|\textbf{u}_N(z)|_{C^0(\Gamma_n^*;{V_D})}
\lesssim \frac{1}{1-2d_n\gamma_n}(||\textbf{f}_N||_{C^0(\Gamma;L^2(D))}+||\textbf{g}_N||_{C^0(\Gamma;L^2(\partial D_1))}+1).
\end{equation}
\end{lem}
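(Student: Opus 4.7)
The plan is to construct an explicit Taylor expansion of $\bm{\sigma}_N$ and $\textbf{u}_N$ in the variable $y_n$ around any real point $y_n^0 \in \Gamma_n$, and then use the derivative bound already established in \eqref{extension derivative q_n} to show that this expansion converges in the complex disk of radius $d_n < 1/(2\gamma_n)$ and to obtain the explicit constant $1/(1-2d_n\gamma_n)$. The crucial input is that the truncated K--L expansions \eqref{truncated K-L}--\eqref{truncated K-L of g} are affine in each $y_n$, so the parametric PDE \eqref{deterministic problem} itself extends trivially as a holomorphic family in $y_n$; analyticity of the solution then amounts to summability of its Taylor series.

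First I would fix an arbitrary $y_n^0 \in \Gamma_n$ and view $\bm{\sigma}_N(\cdot,y_n,\cdot)$, $\textbf{u}_N(\cdot,y_n,\cdot)$ as mappings $\Gamma_n \to C^0(\Gamma_n^*;\Sigma_D)$ and $\Gamma_n \to C^0(\Gamma_n^*;V_D)$. Applying \eqref{extension derivative q_n} pointwise in $\textbf{y}_n^*$ and taking the supremum over $\textbf{y}_n^* \in \Gamma_n^*$ (using assumption \eqref{assump} so that $\|\textbf{f}_N(\cdot,\textbf{y})\|_0+\|\textbf{g}_N(\cdot,\textbf{y})\|_{0,\partial D_1}$ is uniformly bounded on $\Gamma$), I obtain
\begin{equation*}
\frac{1}{k!}\|\partial_{y_n}^k\bm{\sigma}_N(\cdot,y_n^0,\cdot)\|_{C^0(\Gamma_n^*;\Sigma_D)} + \frac{1}{k!}|\partial_{y_n}^k\textbf{u}_N(\cdot,y_n^0,\cdot)|_{C^0(\Gamma_n^*;V_D)} \lesssim (2\gamma_n)^k M_N,\quad k\ge 0,
\end{equation*}
with $M_N := \|\textbf{f}_N\|_{C^0(\Gamma;L^2(D))}+\|\textbf{g}_N\|_{C^0(\Gamma;L^2(\partial D_1))}+1$.

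Next, for $z \in \Xi(\Gamma_n;d_n)$, I would define the extensions by the formal Taylor series
\begin{equation*}
\bm{\sigma}_N(\cdot,z,\cdot) := \sum_{k=0}^{\infty}\frac{(z-y_n^0)^k}{k!}\,\partial_{y_n}^k\bm{\sigma}_N(\cdot,y_n^0,\cdot),\qquad \textbf{u}_N(\cdot,z,\cdot) := \sum_{k=0}^{\infty}\frac{(z-y_n^0)^k}{k!}\,\partial_{y_n}^k\textbf{u}_N(\cdot,y_n^0,\cdot),
\end{equation*}
where $y_n^0$ is chosen as the closest point in $\Gamma_n$ to $z$, so $|z-y_n^0|\le d_n$. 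The previous step bounds each series term by $M_N(2d_n\gamma_n)^k$, and since $2d_n\gamma_n < 1$ by hypothesis, the geometric series converges absolutely and gives \eqref{p 3}:
\begin{equation*}
\|\bm{\sigma}_N(z)\|_{C^0(\Gamma_n^*;\Sigma_D)} + |\textbf{u}_N(z)|_{C^0(\Gamma_n^*;V_D)} \lesssim \sum_{k=0}^{\infty}(2d_n\gamma_n)^k M_N = \frac{M_N}{1-2d_n\gamma_n}.
\end{equation*}
Independence of the extension from the choice of $y_n^0$ follows from the identity principle applied on overlapping disks covering $\Xi(\Gamma_n;d_n)$, which also yields true analyticity of the resulting $z\mapsto\bm{\sigma}_N(z),\textbf{u}_N(z)$ on the full strip.

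The main obstacle is the derivative bound \eqref{extension derivative q_n} itself, which underpins everything; this must be proved by induction on $q_n$ through differentiating the mixed system \eqref{deterministic problem} in $y_n$, using the stability estimate \eqref{stability N} to control the resulting inhomogeneous problem, and absorbing the terms $\sqrt{\widetilde\lambda_n}\widetilde b_n$, $\sqrt{\widehat\lambda_{in}}\widehat b_{in}$, $\sqrt{\overline\lambda_{in}}\overline b_{in}$ that appear when $\partial_{y_n}$ hits the coefficient $\widetilde E_N$ or the data $\textbf{f}_N, \textbf{g}_N$; this is precisely where the factor $2\gamma_n$ (hence the definition \eqref{gamma_n}) and the factor $1/e_{min}'$ coming from the lower bound \eqref{truncated E bounded} enter the induction. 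Since this bound is already assumed in the statement, the remainder of the argument is a routine summability and uniqueness-of-continuation computation.
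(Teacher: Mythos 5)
Your proposal is correct and follows essentially the same route as the paper's own proof: extend the derivative bound \eqref{extension derivative q_n} to all orders $r$, expand $\bm{\sigma}_N$ and $\textbf{u}_N$ in Taylor series about real points of $\Gamma_n$, bound the terms by the geometric series $\sum_k (2d_n\gamma_n)^k$ to get both convergence on $\Xi(\Gamma_n;d_n)$ and the constant $1/(1-2d_n\gamma_n)$, and conclude analyticity on the whole region by a continuation argument. Your added remarks on where the inductive proof of the derivative bound uses \eqref{stability N}, \eqref{truncated E bounded} and the definition of $\gamma_n$ only make explicit what the paper compresses into ``standard inductive arguments.''
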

\begin{proof} Similar to
 (\ref{extension derivative q_n}),  for $\textbf{y}\in \Gamma$, $r\geq 0$ and $ n=1,2,...,N$ it holds
\begin{equation}
\label{extension derivative r}
\frac{||\partial_{y_n}^{r}\bm{\sigma}_N(\cdot, \textbf{y})||_{0}}{r!}+\frac{|\partial_{y_n}^{r}\textbf{u}_N(\cdot, \textbf{y})|_{1}}{r!}\lesssim (2\gamma_n)^r(||\textbf{f}_N(\cdot,\textbf{y})||_{0}+||\textbf{g}_N(\cdot,\textbf{y})||_{0,\partial D_1}+1).
\end{equation}
For any $y_n \in \Gamma_n$, we define power series 
$$
 \bm{\sigma}_N(\textbf{x},z,y_n^*)=\sum_{r=0}^{\infty}\frac{(z-y_n)^r}{r!} \partial_{y_n}^{r}\bm{\sigma}_N(\textbf{x},y_n,y_n^*), ~~~~\textbf{u}_N(\textbf{x},z,y_n^*)=\sum_{r=0}^{\infty}\frac{(z-y_n)^r}{r!} \partial_{y_n}^{r}\textbf{u}_N(\textbf{x},y_n,y_n^*).
$$
then it follows
$$
||\bm{\sigma}_N(\textbf{x},z,y_n^*)||_{0}\leq \sum_{r=0}^{\infty}\frac{|z-y_n|^r}{r!}||\partial_{y_n}^{r}\bm{\sigma}_N(\textbf{x},y_n,y_n^*)||_{0},
$$
$$
|\textbf{u}_N(\textbf{x},z,y_n^*)|_{1}\leq \sum_{r=0}^{\infty}\frac{|z-y_n|^r}{r!}|\partial_{y_n}^{r}\textbf{u}_N(\textbf{x},y_n,y_n^*)|_{1}.
$$
  Due to (\ref{extension derivative r}), we   easily  know that   the  above two series converge for all $z \in \Xi(\Gamma_n; d_n)$. Furthermore, by a continuation argument, the functions
 $\bm{\sigma}_N$, $\textbf{u}_N$ can be extended analytically on the whole region $\Xi(\Gamma_n; d_n)$, and the estimate (\ref{p 3}) follows.
\end{proof}

In order to estimate the right-hand-side terms of (\ref{5})(\ref{8}), we need one more lemma by Babu$\breve{s}$ka et al \cite{collocation}.
\begin{lem}
\label{lemma ph2}
Let $B$ be a Banach space, and $L\subset R $   be a bounded set.
 Given a function $v\in C^0(L; B)$ which admits an analytic extension  in the region of the complex plane $\Xi(L; d)=\{z\in \mathbb{C}, dist(z,L)\leq d\}$ for some $d>0$, it holds
 \begin{equation}
 \min_{w\in P_p(L)\otimes B}||v-w||_{C^0(L; B)}\leq \frac{2}{\varrho-1}\varrho^{-p} \max_{z\in \Xi(L; d)}||v(z)||_{B},
 \end{equation}
where  $P_p(L):=span(y^s, s=0,1,...,p)$, $1<\varrho:=\frac{2d}{|L|}+\sqrt{1+\frac{4d^2}{|L|^2}}$.
\end{lem}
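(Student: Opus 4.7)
The plan is to reduce the statement to the classical Bernstein approximation theorem for analytic functions via truncated Chebyshev expansions. First I would perform an affine change of variables that maps $L$ onto $[-1,1]$; this sends $|L|$ to $2$ and the tubular neighborhood $\Xi(L;d)$ to $\Xi([-1,1];\hat d)$ with $\hat d=2d/|L|$. A direct computation verifies that $\varrho=\hat d+\sqrt{1+\hat d^2}$ is precisely the parameter of the Bernstein ellipse $E_\varrho:=\{(\varrho e^{i\theta}+\varrho^{-1}e^{-i\theta})/2:\theta\in[0,2\pi)\}$ whose semi-minor axis equals $\hat d$, since $\varrho^{-1}=\sqrt{1+\hat d^2}-\hat d$ gives $(\varrho-\varrho^{-1})/2=\hat d$. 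An elementary geometric check on the parametrization then shows that the closed region bounded by $E_\varrho$ lies inside $\Xi([-1,1];\hat d)$.

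Next I would expand $v$ in a Banach-space valued Chebyshev series $v(y)=\sum_{k=0}^\infty c_k T_k(y)$, where $T_k$ is the $k$th Chebyshev polynomial of the first kind and the coefficients $c_k\in B$ are defined through the Bochner integral
\begin{equation*}
c_k=\frac{2-\delta_{k,0}}{\pi}\int_{-1}^1 \frac{v(y)T_k(y)}{\sqrt{1-y^2}}\,dy.
\end{equation*}
To bound $\|c_k\|_B$, I would apply an arbitrary continuous linear functional $\ell\in B^*$ and reduce to the scalar case: $\ell(c_k)$ is the $k$th Chebyshev coefficient of the complex-analytic function $\ell\circ v$, and deforming the defining contour onto $\partial E_\varrho$ via the Joukowski substitution $y=(\zeta+\zeta^{-1})/2$ with $\zeta=\varrho e^{i\theta}$, combined with Cauchy's theorem, yields $|\ell(c_k)|\le 2\varrho^{-k}\max_{z\in E_\varrho}|\ell(v(z))|\le 2\varrho^{-k}\|\ell\|_{B^*}M$ with $M:=\max_{z\in E_\varrho}\|v(z)\|_B$. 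Taking the supremum over $\|\ell\|_{B^*}\le 1$ and invoking the Hahn-Banach theorem gives $\|c_k\|_B\le 2\varrho^{-k}M$ for $k\ge 1$.

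Finally, since $\|T_k\|_{C^0([-1,1])}=1$, truncating the series after degree $p$ bounds the approximation error by
\begin{equation*}
\min_{w\in P_p(L)\otimes B}\|v-w\|_{C^0(L;B)}\le \Big\|\sum_{k=p+1}^\infty c_k T_k\Big\|_{C^0(L;B)}\le \sum_{k=p+1}^\infty \|c_k\|_B\le 2M\sum_{k=p+1}^\infty \varrho^{-k}=\frac{2M\varrho^{-p}}{\varrho-1}.
\end{equation*}
Dominating $M$ by $\max_{z\in\Xi([-1,1];\hat d)}\|v(z)\|_B$, which is valid because $E_\varrho\subset\Xi([-1,1];\hat d)$, and undoing the rescaling produces the claimed estimate.

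The principal obstacle is justifying the Banach-space valued Chebyshev expansion and the coefficient bound: one must interpret the analytic extension of $v$ in the Bochner-holomorphic sense so that $\ell\circ v$ is genuinely complex-analytic on $\Xi$ for every $\ell\in B^*$ and Cauchy's theorem may be applied to the contour deformation. Once this is secured, the remainder of the argument is one-dimensional complex analysis combined with a geometric-series summation, and every constant is manifestly independent of the particular Banach space $B$.
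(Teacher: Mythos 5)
The paper does not actually prove this lemma: it is quoted verbatim from the reference \cite{collocation} (Babu\v{s}ka--Nobile--Tempone), so there is no in-paper argument to compare against. Your proof is a correct, self-contained derivation of the cited result by the standard route: affine rescaling of $L$ to $[-1,1]$, Chebyshev expansion with Bochner-integral coefficients, the Bernstein-ellipse coefficient bound $\|c_k\|_B\le 2M\varrho^{-k}$ obtained by contour deformation after reduction to the scalar case via Hahn--Banach, and geometric-series summation of the tail, which gives exactly $\tfrac{2}{\varrho-1}\varrho^{-p}M$. The two nontrivial checkpoints are both handled correctly: the identification $(\varrho-\varrho^{-1})/2=\hat d$ and the inclusion of the closed Bernstein ellipse in the stadium $\Xi([-1,1];\hat d)$ (which holds because for a boundary point $(a\cos\theta,\hat d\sin\theta)$ with $a\cos\theta>1$ one needs $(a-\hat d)\cos\theta\le 1$, and $a-\hat d=\varrho^{-1}<1$), and the weak-analyticity argument that makes $\ell\circ v$ genuinely holomorphic so that the scalar coefficient bound applies. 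Two small caveats: you implicitly treat $L$ as an interval of length $|L|$, whereas the statement says only ``bounded set''---this is how the lemma is used (each $\Gamma_n$ is the range of a bounded random variable), but for a disconnected $L$ the neighbourhood $\Xi(L;d)$ need not contain the Bernstein ellipse of $\mathrm{conv}(L)$, so the interval assumption should be made explicit; and you should record that the geometrically decaying coefficients force the Chebyshev partial sums to converge uniformly in $B$ to $v$ itself (again via functionals and Hahn--Banach), which closes the loop on the truncation estimate.
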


 In light of  (\ref{9})-(\ref{8}) and Lemmas \ref{lemma ph}-\ref{lemma ph2}, we immediately  obtain the following result.
 \begin{thm}\label{theorem44}
Let $(\bm{\sigma}_N, \textbf{u}_N)\in (L^2_{\rho}(\Gamma)\otimes {\Sigma_D})\times( L^2_{\rho}(\Gamma)\otimes {V_D})$ and $(\bm{\sigma}_{ph}, \textbf{u}_{ph})\in (Z^{\textbf{p}} \otimes {\Sigma_D}_h)\times( Z^{\textbf{p}} \otimes {V_D}_h)$ be the solutions of  (\ref{truncated continous formulation N}) and (\ref{discrete formulation  ph-version}), respectively. Then, under the same condition as in Lemma \ref{lemma assumption} and for sufficiently large $N$, it holds
\begin{equation}\label{error1}
||\bm{\sigma}_N-\bm{\sigma}_{kh}||_{\widetilde{0}}+|\textbf{u}_N-\textbf{u}_{kh}|_{\widetilde{1}}
\lesssim h+\sum_{n=1}^{N}{\varrho_n}^{-p_n},
\end{equation}
where $\varrho_n=\frac{2 d_n}{|\Gamma_n|}+\sqrt{1+\frac{4 d_n^2}{|\Gamma_n|^2}}$ and $0<d_n<\frac{1}{2\gamma_n}$.
\end{thm}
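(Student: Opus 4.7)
The plan is to combine the standard mixed-method a priori estimate with the one-dimensional analytic-extension machinery already set up in Lemmas \ref{lemma ph} and \ref{lemma ph2}. Because $Z^{\textbf{p}}$ is a special instance of $Y_{\textbf{k}}^{\textbf{q}}$ (take the partition of $\Gamma$ to be the trivial one-box partition and $q_n=p_n$), the discrete kernel-coercivity $(\textbf{A}_h)$ and discrete inf-sup $(\textbf{B}_h)$ established in Lemma \ref{4.2} and Lemma \ref{4.4} transfer verbatim. The theory of mixed finite element methods then yields
\begin{equation}\nonumber
\|\bm{\sigma}_N-\bm{\sigma}_{ph}\|_{\widetilde{0}}+|\textbf{u}_N-\textbf{u}_{ph}|_{\widetilde{1}}
\lesssim \inf_{\bm{\tau}_{ph}\in Z^{\textbf{p}}\otimes \Sigma_{Dh}}\|\bm{\sigma}_N-\bm{\tau}_{ph}\|_{\widetilde{0}}
+ \inf_{\textbf{v}_{ph}\in Z^{\textbf{p}}\otimes V_{Dh}}|\textbf{u}_N-\textbf{v}_{ph}|_{\widetilde{1}}.
\end{equation}

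Next I would split each best-approximation term into a purely spatial error and a purely stochastic error via the triangle inequalities \eqref{9} and \eqref{10}. The spatial pieces $h\|\bm{\sigma}_N\|_{\widetilde{1}}$ and $h\|\textbf{u}_N\|_{\widetilde{2}}$ are bounded by $h$ using the regularity \eqref{regu1} together with the uniform bounds on $\textbf{f}_N$, $\textbf{g}_N$ obtained from Assumption \ref{assumption of Y_n uniformly bounded} and the $C^0$ hypotheses \eqref{assump}. For the stochastic pieces I would invoke \eqref{5} and \eqref{8}, which reduce the multivariate best approximation to the sum over $n$ of one-dimensional best approximations of $\bm{\sigma}_N(\cdot,y_n,\cdot)$ and $\textbf{u}_N(\cdot,y_n,\cdot)$, viewed as functions of $y_n\in\Gamma_n$ taking values in the Banach spaces $C^0(\Gamma_n^*;\Sigma_D)$ and $C^0(\Gamma_n^*;V_D)$, respectively.

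For each such one-dimensional problem, Lemma \ref{lemma ph} provides the analytic extension to the complex strip $\Xi(\Gamma_n;d_n)$ with $0<d_n<1/(2\gamma_n)$, together with a uniform bound on the extension. Applying Lemma \ref{lemma ph2} with $L=\Gamma_n$, $B=C^0(\Gamma_n^*;\Sigma_D)$ or $B=C^0(\Gamma_n^*;V_D)$, and $p=p_n$, one obtains
\begin{equation}\nonumber
\inf_{\bm{\tau}_{p_n}\in Z_n^{p_n}\otimes \Sigma_D}\|\bm{\sigma}_N-\bm{\tau}_{p_n}\|_{C^0(\Gamma;\Sigma_D)}
+\inf_{\textbf{v}_{p_n}\in Z_n^{p_n}\otimes V_D}\|\textbf{u}_N-\textbf{v}_{p_n}\|_{C^0(\Gamma;V_D)}
\lesssim \varrho_n^{-p_n},
\end{equation}
with $\varrho_n=2d_n/|\Gamma_n|+\sqrt{1+4d_n^2/|\Gamma_n|^2}>1$. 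Summing over $n$ and combining with the spatial estimate gives the claimed bound, after observing that the stronger $C^0$-in-$\textbf{y}$ control dominates the $L^2_{\rho}$-in-$\textbf{y}$ norm up to a factor $\|\rho\|_{L^1(\Gamma)}^{1/2}\lesssim 1$ since $\Gamma$ is bounded.

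The main obstacle is a bookkeeping one rather than a conceptual one: the $C^0(\Gamma_n;B)$ estimate from Lemma \ref{lemma ph2} must be upgraded to a $\widetilde{0}$-type estimate on all of $\Gamma$ so that it can be fed back into the mixed-method quasi-optimality bound, and the Banach-space target $B$ itself is of $C^0$-type in the remaining coordinates $\textbf{y}_n^*$. This is exactly why Lemma \ref{lemma ph} was stated with values in $C^0(\Gamma_n^*;\Sigma_D)$ and $C^0(\Gamma_n^*;V_D)$ rather than in $L^2_\rho$-type spaces; once this is exploited, the remainder is assembly of the three estimates, noting that $h$ absorbs $h\|\bm{\sigma}_N\|_{\widetilde 1}+h\|\textbf{u}_N\|_{\widetilde 2}$ with a hidden constant depending only on $\|\textbf{f}_N\|_{C^0(\Gamma;L^2(D))}$, $\|\textbf{g}_N\|_{C^0(\Gamma;L^2(\partial D_1))}$ and the Lamé-robust constants supplied by Theorem 2.1 and Lemma \ref{lemma assumption}.
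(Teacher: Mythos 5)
Your proposal is correct and follows essentially the same route as the paper: the paper likewise treats the $p\times h$ scheme as a special case of the $k\times h$ scheme to inherit the stability conditions and the quasi-optimality bound, splits the best-approximation error via \eqref{9}--\eqref{10} into a spatial part of order $h$ and a stochastic part reduced by \eqref{5}--\eqref{8} to one-dimensional approximations, and then applies Lemma \ref{lemma ph} (analytic extension to $\Xi(\Gamma_n;d_n)$) together with Lemma \ref{lemma ph2} to obtain the $\varrho_n^{-p_n}$ decay. Your added remark on passing from the $C^0(\Gamma;\cdot)$ bounds to the $\widetilde{0}$-type norms (using that $\rho$ is a probability density on the bounded set $\Gamma$) fills in a detail the paper leaves implicit but does not change the argument.
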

The above theorem, together with Lemma \ref{propo truncated N}, implies the following a priori error estimates for the $p\times h$-SHSFEM approximation $(\bm{\sigma}_{ph}, \textbf{u}_{ph})$.
\begin{thm}
Let $(\bm{\sigma}, \textbf{u})\in (L^2_P(\Omega;~ \Sigma_D))\times( L^2_P(\Omega ;  ~{V_D}))$  and $(\bm{\sigma}_{ph}, \textbf{u}_{ph})\in (Z^{\textbf{p}} \otimes {\Sigma_D}_h, Z^{\textbf{p}} \otimes {V_D}_h)$ be the solutions of (\ref{continous formulation})  and (\ref{discrete formulation  ph-version}), respectively.  Then, under the same conditions as in Theorem \ref{theorem44},  it holds
\begin{equation}
||\bm{\sigma}-\bm{\sigma}_{kh}||_{\widetilde{0}}+|\textbf{u}-\textbf{u}_{kh}|_{\widetilde{1}}
\lesssim   N^{1/2}e^{- r N^{1/2}} + h +
\sum_{n=1}^{N}{\varrho_n}^{-p_n}
\end{equation}
  for any $r>0$ if the  covariance functions of $\widetilde{E}$, $\textbf{f}$ and $\textbf{g}$
are piecewise analytic,  and holds
\begin{equation}
||\bm{\sigma}-\bm{\sigma}_{kh}||_{\widetilde{0}}+|\textbf{u}-\textbf{u}_{kh}|_{\widetilde{1}}
\lesssim   N^{-s}+ h +
\sum_{n=1}^{N}{\varrho_n}^{-p_n}
\end{equation}
 for any $s>0$ if the  covariance functions of $\widetilde{E}$, $\textbf{f}$ and $\textbf{g}$
are piecewise smooth.
\end{thm}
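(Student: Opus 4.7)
The plan is to reduce the combined error estimate to two ingredients that have already been established in the paper: the truncation-error bound of Lemma~\ref{propo truncated N} and the finite-element discretization bound of Theorem~\ref{theorem44}. Concretely, I would introduce the intermediate solution $(\bm{\sigma}_N, \textbf{u}_N)\in (L^2_{\rho}(\Gamma)\otimes \Sigma_D)\times(L^2_{\rho}(\Gamma)\otimes V_D)$ of the truncated variational problem \eqref{truncated continous formulation N}, and then apply the triangle inequality,
\begin{equation*}
\|\bm{\sigma}-\bm{\sigma}_{ph}\|_{\widetilde{0}}+|\textbf{u}-\textbf{u}_{ph}|_{\widetilde{1}}
\le \bigl(\|\bm{\sigma}-\bm{\sigma}_N\|_{\widetilde{0}}+|\textbf{u}-\textbf{u}_N|_{\widetilde{1}}\bigr)
+\bigl(\|\bm{\sigma}_N-\bm{\sigma}_{ph}\|_{\widetilde{0}}+|\textbf{u}_N-\textbf{u}_{ph}|_{\widetilde{1}}\bigr).
\end{equation*}

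For the first bracketed term (the K--L truncation error), I would invoke Lemma~\ref{propo truncated N}: when the covariance functions of $\widetilde{E}, \textbf{f}, \textbf{g}$ are piecewise analytic, estimate \eqref{6} gives a bound of order $N^{1/2}e^{-rN^{1/2}}$ for some $r>0$; when they are piecewise smooth, estimate \eqref{7} gives a bound of order $N^{-s}$ for any $s>0$. These are precisely the first summands in the two displayed conclusions.

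For the second bracketed term (the discretization error of the $p\times h$-SHSFEM), I would directly cite Theorem~\ref{theorem44}, which, assuming the mesh condition of Lemma~\ref{lemma assumption} and sufficiently large $N$, bounds it by $h+\sum_{n=1}^{N}\varrho_n^{-p_n}$, with $\varrho_n$ as defined there. Combining this with the truncation estimate in each of the two regularity regimes yields the two desired inequalities. Since the constants hidden in $\lesssim$ in Lemma~\ref{propo truncated N} and Theorem~\ref{theorem44} are, by the uniformity conventions of the paper, independent of $\lambda$, $h$, $N$, $\textbf{p}$, the final bound is uniform in these parameters as well.

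There is no genuine obstacle: the proof is essentially a two-line splitting plus substitution. The only subtle point worth flagging is that Theorem~\ref{theorem44} requires $N$ sufficiently large (to ensure uniform positivity of $\widetilde{E}_N$ via \eqref{truncated E bounded}), which is already implicit in our hypotheses, and that one must be careful to state that the implicit constants absorb the $\lambda$-independent, $h$-independent, $\textbf{p}$-independent constants from both lemmas simultaneously.
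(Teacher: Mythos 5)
Your proposal is correct and follows essentially the same route as the paper: the paper derives this theorem by exactly the triangle-inequality splitting through the truncated solution $(\bm{\sigma}_N,\textbf{u}_N)$, bounding the truncation term by Lemma~\ref{propo truncated N} and the discretization term by Theorem~\ref{theorem44}. Your remark about requiring $N$ sufficiently large and about the uniformity of the hidden constants matches the paper's standing conventions, so nothing is missing.
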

\begin{rem}
This theorem shows the $p\times h$-SHSFEM yields    exponential rates of convergence with
respect to   the degrees $ (p_1,p_2,...,p_N)$  of the polynomials used for approximation.
\end{rem}
\begin{figure}[!h]
\begin{center}
\setlength{\unitlength}{0.5cm}
\begin{picture}(22,7)
\put(0,3){\line(1,0){10}} \put(0,3){\line(0,1){2}} \put(0,5){\line(1,0){10}} \put(10,3){\line(0,1){2}} \put(2,3){\line(0,1){2}}
\put(4,3){\line(0,1){2}} \put(6,3){\line(0,1){2}}  \put(8,3){\line(0,1){2}} \put(4,2.4){$5\times 1$}

\put(0,0){\line(1,0){10}} \put(0,0){\line(0,1){2}} \put(0,2){\line(1,0){10}} \put(10,0){\line(0,1){2}} \put(1,0){\line(0,1){2}}
\put(2,0){\line(0,1){2}} \put(3,0){\line(0,1){2}} \put(4,0){\line(0,1){2}} \put(5,0){\line(0,1){2}} \put(6,0){\line(0,1){2}}
\put(7,0){\line(0,1){2}} \put(8,0){\line(0,1){2}} \put(9,0){\line(0,1){2}} \put(0,1){\line(1,0){10}} \put(4,-0.6){$10\times 2$}
\put(2,-2){Rectangular meshes }

\put(15,3){\line(1,0){10}} \put(15,3){\line(0,1){2}} \put(15,5){\line(1,0){10}} \put(25,3){\line(0,1){2}} \put(16,3){\line(1,2){1}}
\put(17,3){\line(1,1){2}} \put(19,3){\line(1,2){1}} \put(22,3){\line(-1,2){1}} \put(19,2.4){$5\times 1$}

\put(15,0){\line(1,0){10}} \put(15,0){\line(0,1){2}} \put(15,2){\line(1,0){10}} \put(25,0){\line(0,1){2}} \put(16,0){\line(1,2){1}}
\put(17,0){\line(1,1){2}} \put(19,0){\line(1,2){1}} \put(22,0){\line(-1,2){1}} \put(15.5,0){\line(1,4){0.5}} \put(16.5,0){\line(3,4){1.5}}
\put(18,0){\line(3,4){1.5}}   \put(20.5,0){\line(0,1){2}} \put(23.5,0){\line(-1,4){0.5}} \put(15,1){\line(1,0){10}} \put(19,-0.6){$10\times 2$}
\put(17,-2){Irregular meshes}
\end{picture}
\end{center}
\vspace{0.5cm} \caption{Finite element meshes }
 \label{fig: FiniteElementMesh}
\end{figure}
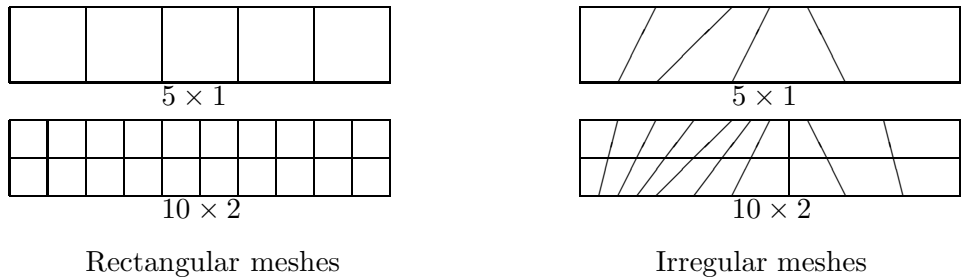

\section{Numerical examples}

In this section we compute two numerical examples  to test the performance of the proposed $p\times h$-version of stochastic hybrid stress finite element method.  We note that the $p\times h$-SHSFEM can be viewed as a particular case of the $k\times h$ version. For convenience we denote 
$$e_u:=\frac{|\textbf{u}-\textbf{u}_{ h}|_{\widetilde{1}}}{|\textbf{u}|_{\widetilde{1}}},\quad e_\sigma:=\frac{||\bm{\sigma}-\bm{\sigma}_{ h}||_{\widetilde{0}}}{||\bm{\sigma}||_{\widetilde{0}}},
$$
where $(\textbf{u}_{ h},\bm{\sigma}_{ h})$ is the corresponding stochastic   finite element approximation to the exact solution $(\textbf{u},\bm{\sigma})$.

\textbf{Example 1} : \textbf{stochastic  plane stress problem}

 Set the spatial domain $D=(0,10)\times (-1,1)$ with meshes as in Figure \ref{fig: FiniteElementMesh}. The body force $\textbf{f}$ and the surface traction $\textbf{g}$ on $\partial D_1=\{(x_1,x_2)\in[0,10]\times[-1,1]:x_1=10 ~\text{or}~ x_2=\pm1\}$ are given by $$\textbf{f}=(0,0)^T, \quad \textbf{g}|_{x_1=10}=(-2\widetilde{E}x_2,0)^T, \quad \textbf{g}|_{x_2=\pm1}=(0,0)^T.$$ The exact solution $(\textbf{u},\bm{\sigma})$ is  of the form
\begin{displaymath}
\textbf{u}=\left( \begin{array}{c} -2x_1x_2 \\ x_1^2+\nu(x_2^2-1)\end{array}\right) ,\quad  \bm{\sigma}=\left(\begin{array}{cc} -2\widetilde{E}x_2 & 0\\0 & 0\end{array} \right),
\end{displaymath}
 where $\widetilde{E}$ is a uniform random variable on $[500,1500]$, and we set $\nu=0.25$.

 In the computation we use the exact form of  the stochastic coefficient  $\widetilde{E}$  and take $N=1$, so there is no   truncation error caused by  the K-L expansion in the approximation.
  Numerical results at different meshes and different values of $p$ are listed in Tables
 \ref{table stress pc  regular}-\ref{table stress pc  irregular}. For comparison we  also list results computed by a stochastic finite element called  $PC\times h$ method, where the polynomial chaos (PC) method \cite{1990 R.G} and the  PS element method are used in the stochastic field $\Gamma$  and  the space domain $D$, respectively.   In the  $PC\times h$ method,  $p$ denotes the degree of polynomial chaos.  We note that  the  computational costs of the $PC\times h$ method and the $p\times h$-SHSFEM are  almost the same with the same $p$.

From  the  numerical results we can see that  the solutions are more accurate with  the increasing   of $p$ and the refinement of meshes. Especially, $p=1$ and $p=2$ for the  $p\times h$-SHSFEM  give almost the
same results, which implies that the solutions are accurate enough with respect to the $p$-version approximation of the stochastic field for given spatial meshes; In these cases,  the  $p\times h$-SHSFEM  is of  first order accuracy  in the mesh size $h$ for  the displacement approximation and yields quite accurate results for the stress approximation.   What's more, we can see that the  $p\times h$-SHSFEM is more accurate than  the $PC\times h$ method at the same $p$.


\begin{table}[!htb]\renewcommand{\baselinestretch}{1.25}\small
 \centering
 \caption{  Results for two methods  under  rectangular meshes: Example 1 }
 \label{table stress pc  regular}
\begin{tabular}{c|cccccccccc}
  \hline
&& &$e_u$&  & & & &$e_\sigma$&  &\\\cline{3-6}\cline{8-11}
 Methods & $p$ & $5\times1$ &$10\times2$ & $20\times4$ & $40\times8$ & &$5\times1$ &$10\times2$ & $20\times4$ & $40\times8$\\
  \hline
   &4&0.0733  &0.0375   &0.0204   & 0.0130 &  &0.0202  &0.0202  &0.0202 &0.0202 \\
    $PC\times h$ &6 &0.0728  &0.0365 &0.0186   &0.0098  &  &0.0079  &0.0079  & 0.0079& 0.0079\\
   &8  &0.0727  & 0.0364  &0.0182   &0.0092  &  &0.0033  &0.0033  &0.0033 &0.0033 \\
   \hline
   & 0& 0.1223  &0.1050   &0.1003  & 0.0990 &  &0.2774  &0.2774  &0.2774 &0.2774 \\
   $p \times h$ &1& 0.0727  &0.0363   & 0.0182  &0.0091   &  &0  &0  &0  &0  \\
    &2&  0.0727  &0.0363   & 0.0182  &0.0091   &  &0  &0  &0  &0  \\
    \hline

\end{tabular}
\end{table}

\begin{table}[!htb]\renewcommand{\baselinestretch}{1.25}\small
 \centering
 \caption{Results for two methods under irregular meshes: Example 1}
 \label{table stress pc  irregular}
\begin{tabular}{c|cccccccccc}
  \hline
&& &$e_u$&  & & & &$e_\sigma$&  &\\\cline{3-6}\cline{8-11}
 Methods & $p$ & $5\times1$ &$10\times2$ & $20\times4$ & $40\times8$ & &$5\times1$ &$10\times2$ & $20\times4$ & $40\times8$\\
  \hline
  &4&0.1431 & 0.0637  & 0.0325  &0.0181  &  &0.2632  &0.0579  &0.0231 &0.0203 \\
   $PC \times h$ &6 &0.1429  &0.0631  &0.0314   &0.0160  &  &0.2626  &0.0549  &0.0137 &0.0083 \\
   &8  &0.1429  &0.0630   &0.0312   &0.0156  &  &0.2625  & 0.0544  &0.0117&0.0041 \\
   \hline
   & 0& 0.1435 &0.1160  &0.1037  &0.0999  &  &0.3684  &0.2816  &0.2775 &0.2774 \\
   $p\times h$ & 1& 0.1429 &0.0630   & 0.0311  &0.0155   &  & 0.2524 &0.0509   &0.0104  &0.0023  \\
   & 2&  0.1429 &0.0630   & 0.0311  &0.0155   &  & 0.2524 &0.0509   &0.0104  &0.0023    \\
   \hline
\end{tabular}
\end{table}

\textbf{Example 2}  :    \textbf{stochastic plane strain problem}

The  domain $\Omega$ and  meshes are the same as in Figure \ref{fig: FiniteElementMesh}.
The body force $\textbf{f}=(0,0)^T$. The surface traction $\textbf{g}$ on $\partial D_1=\{(x_1,x_2)\in[0,10]\times[-1,1]:x_1=10 ~or~ x_2=\pm1\}$ is given by $\textbf{g}|_{x_1=10}=(-2\widetilde{E}x_2,0)^T$, $\textbf{g}|_{x_2=\pm1}=(0,0)^T$, and the exact solution $(\textbf{u},\bm{\sigma})$ is  of the form
\begin{displaymath}
\textbf{u}=\left( \begin{array}{c} -2(1-\nu^2)x_1x_2 \\ (1-\nu^2)x_1^2+\nu(1+\nu)(x_2^2-1)\end{array}\right),~~~~~~~~~~ ~~~ \bm{\sigma}=\left(\begin{array}{cc} -2\widetilde{E}x_2 & 0\\0 & 0\end{array} \right),
\end{displaymath}
 where $\widetilde{E}=1+\xi^2$, $\xi$ is a  standard  normal Gaussian random variable.

 Similar to Example 1, in the computation we use the exact form of  the stochastic coefficient  $\widetilde{E}$ and take $N=1$.
  Numerical results at different meshes, different values of $p$ and different values of Poisson ratio $\nu$ are listed in Tables \ref{tab3}-\ref{tab8}.    For comparison we  also list results computed by a stochastic finite element called  $p \times $bilinear method, where the $p$-version method  and the  bilinear element  are used in the stochastic field $\Gamma$  and  the space domain $D$, respectively.     We note that  the  computational costs of the $p \times $bilinear method and the $p\times h$-SHSFEM are  almost the same.

Tables \ref{tab3}-\ref{tab4} show that the $p \times $bilinear method deteriorates as $\nu\rightarrow0.5$ or $\lambda\rightarrow +\infty$, while Tables \ref{tab5}-\ref{tab8} show that the $p\times h$-SHSFEM yields uniformly accurate results for  the displacement and stress approximations. Moreover,  $p=0$ and $p=2$ give almost the same results, which implies that the solutions are  accurate enough with respect to the $p$-version approximation of the stochastic field for given spatial meshes.

\begin{table}[!htb]\renewcommand{\baselinestretch}{1.25}\small
 \centering
 \caption{Results of $e_u$ for Example 2: $p\times$bilinear  method, $p=0$ }
 \label{tab3}
 \begin{tabular}{ccccccccccc}
   \hline
    && Rectangular  & meshes &  & && Irregular  & meshes &\\\cline{2-5}\cline{7-10}
  $\nu$&10$\times$2& 20$\times$4 & 40$\times$8 &80$\times$16  &~ & 10$\times$2 & 20$\times$4 & 40$\times$8 &80$\times$16 \\
   \hline
    0.25& 0.5384  &  0.3061 &    0.1625  &0.0883  &  &  0.6854  &  0.4501    & 0.2532 &0.1356\\
    0.49 &0.8516  &  0.6523  &  0.4034 &    0.2175 &¡¡&   0.8782  &  0.7424 &   0.5218 &0.3038\\
   0.499& 0.9533  &  0.9070   & 0.7856  &  0.5579 & &   0.9511 &   0.9145   & 0.8322  &0.6617\\
  0.4999&  0.9661 &   0.9556 &   0.9365  &  0.8760  & &  0.9641  &  0.9550  &  0.9378 &0.8925  \\
   \hline
 \end{tabular}
\end{table}

\begin{table}[!htb]\renewcommand{\baselinestretch}{1.25}\small
 \centering
 \caption{Results of $e_u$ for Example 2: $p\times$bilinear method, $p=2$}
 \label{tab4}
\begin{tabular}{ccccccccccc}
  \hline
   && Rectangular & meshes&  & && Irregular & meshes &\\\cline{2-5}\cline{7-10}
  $\nu$&10$\times$2& 20$\times$4 &40$\times$8 &80$\times$16  &~ & 10$\times$2 & 20$\times$4 & 40$\times$8 &80$\times$16 \\
  \hline
    0.25&    0.5384&    0.3061  &  0.1625 &  0.0883  &&  0.6854  &  0.4501  &  0.2532 & 0.1356\\
    0.49&    0.8516  &  0.6523 &   0.4034  &  0.2175  &&  0.9511 &   0.9145 &   0.8322  &0.6617\\
    0.499&   0.9533 &   0.9070 &   0.7856&    0.5579 &&   0.9511  &  0.9145&    0.8322 & 0.6617\\
    0.4999&    0.9661 &   0.9556  &  0.9365 &   0.8760 &&   0.9641  &  0.9550  &  0.9378 & 0.8925\\

  \hline
\end{tabular}
\end{table}

\begin{table}[!htb]\renewcommand{\baselinestretch}{1.25}\small
 \centering
 \caption{Results of $e_u$  for Example 2:   $p\times h$ SHSFEM,  $p=0$}
 \label{tab5}
\begin{tabular}{ccccccccccc}
  \hline
    && Rectangular & meshes &  & &&Irregular & meshes &\\\cline{2-5}\cline{7-10}
  $\nu$&10$\times$2& 20$\times$4 & 40$\times$8 &80$\times$16  &~ & 10$\times$2 & 20$\times$4 & 40$\times$8 &80$\times$16 \\
  \hline
  0.25&  0.0372 &0.0186  &0.0093  &0.0046&~&0.0676&0.0323&0.0158&0.0079\\
  0.49& 0.0488  &0.0244  & 0.0122 &0.0061&~&0.0763&0.0371&0.0183&0.0091\\
  0.499& 0.0497  &0.0248  &0.0124  &0.0062&~&0.0770&0.0375&0.0185&0.0092\\
  0.4999&  0.0497 &0.0249  &0.0124  &0.0062&~&0.0770&0.0375&0.0185&0.0092\\
  \hline
\end{tabular}
\end{table}

\begin{table}[!htb]\renewcommand{\baselinestretch}{1.25}\small
 \centering
\caption{ Results of $e_\sigma$  for Example 2:  $p\times h$ SHSFEM,  $p=0$}
 \label{tab6}
\begin{tabular}{ccccccccccc}
  \hline
 && Rectangular  & meshes&  & && Irregular & meshes &\\\cline{2-5}\cline{7-10}
   $\nu$&10$\times$2& 20$\times$4 & 40$\times$8 &80$\times$16  &~ & 10$\times$2 & 20$\times$4 & 40$\times$8 &80$\times$16 \\
  \hline
  0.25 & 0  &0&0   &0  &  &  0.1513  &0.0866  &0.0450  &0.0227   \\
   0.49 & 0&   0&  0&0   &  & 0.1559&0.0877 &0.0451 &0.0227 \\
   0.499&  0&0  &0  &0  &   &0.1563 & 0.0878 &0.0452  &0.0227  \\
   0.4999& 0 &  0& 0 & 0 &   &0.1564  &0.0878  &0.0452  &0.0227  \\
  \hline
\end{tabular}
\end{table}

\begin{table}[!htb]\renewcommand{\baselinestretch}{1.25}\small
 \centering
  \caption{Results of $e_u$  for Example 2:   $p\times h$ SHSFEM,  $p=2$}
 \label{tab7}
\begin{tabular}{ccccccccccc}
  \hline
 && Rectangular & meshes&  & && Irregular & meshes &\\\cline{2-5}\cline{7-10}
  $\nu$&10$\times$2& 20$\times$4 & 40$\times$8 &80$\times$16  &~ & 10$\times$2 & 20$\times$4 & 40$\times$8 &80$\times$16 \\
  \hline
  0.25& 0.0372 &0.0186  &0.0093  &0.0046  &  &0.0676  &0.0323  &0.0158  &0.0079  \\
   0.49& 0.0488 &0.0244  &0.0122  &0.0061 &  &0.0763  &0.0371  &0.0183  &0.0091 \\
   0.49& 0.0497 &0.0248  &0.0124  &0.0062  &  &0.0770  &0.0375  &0.0185  &0.0092  \\
   0.4999&0.0497  &0.0249&0.0124 &0.0062  &  &0.0770  &0.0375  &0.0185  &0.0092  \\
  \hline
\end{tabular}
\end{table}

\begin{table}[!htb]\renewcommand{\baselinestretch}{1.25}\small
 \centering
 \caption{Results of $e_\sigma$  for Example 2:   $p\times h$ SHSFEM,  $p=2$}
 \label{tab8}
\begin{tabular}{ccccccccccc }
  \hline
  && Rectangular & meshes&  & && Irregular & meshes&\\\cline{2-5}\cline{7-10}
   $\nu$&10$\times$2& 20$\times$4 & 40$\times$8 &80$\times$16  &~ & 10$\times$2 & 20$\times$4 & 40$\times$8 &80$\times$16 \\
   \hline
 0.25& 0 &0 &0  &0  &  & 0.1513 &0.0866  &0.0450  &0.0227  \\
  0.49& 0 &0  &0  &0  &  & 0.1559& 0.0877 & 0.0451 & 0.0227 \\
   0.499&  0&0  &0  &0 &  & 0.0156 &0.0878  &0.0452  &0.0227  \\
   0.4999& 0 &  0&  0& 0 & & 0.1564 & 0.0878 &0.0452  &0.0277  \\
  \hline
\end{tabular}
\end{table}

\clearpage

\end{document}